\newtheorem{theo}{Theorem}[section]
\newtheorem{prop}[theo]{Proposition}
\newtheorem{lemm}[theo]{Lemma}
\newtheorem{coro}[theo]{Corollary}
\newtheorem{rema}[theo]{Remark}
\newtheorem{defi}{Definition}[section]
\newtheorem{ex}[theo]{Example}
\title{\bf Steenrod Lengths and a Problem of Vakil}
\author{Duc-Khanh Nguyen}
\date{}
\newcommand{\brown}[1]{\textcolor{brown}{#1}}
\newfont{\gothic}{eufb10}
\begin{document}
\maketitle
 
\begin{abstract}
We give an explicit combinatorial description of the function $f(n)$ governing the Steenrod length of real projective spaces $\mathbb{RP}^n$. This function arises in stable homotopy theory through the action of Steenrod squares on mod-$2$ cohomology and is closely related to the ghost length, which measures the minimal number of spheres required to construct a space up to homotopy. Building on the directed graphs $T_n$ introduced by Vakil to encode degree constraints for Steenrod operations, we interpret $f(n)$ as the length of the longest directed path starting at $n$. Using this framework, we resolve a question posed by Vakil by deriving concrete combinatorial formulas for $f(n)$ in terms of binary classes and a distinguished family of integers, which we call Vakil numbers.
\end{abstract}

\textit{\\2020 Mathematics Subject Classification:} Primary 55P42; Secondary 05C20, 55S10, 68W30.\\
\textit{Keywords and phrases:} Steenrod length, ghost length, real projective spaces, directed graphs, binary expansions, combinatorial invariants.

\setcounter{tocdepth}{1}

\section{Introduction} 
We consider the directed graph whose vertices are the non-negative integers. From each vertex $n$, there is an outgoing edge to $n - 2^s$ whenever the $2^s$-bit in the binary expansion of $n$ is unset (i.e., the $s$-th bit is $0$). Let $T_n$ denote the connected graph consisting of all vertices reachable from $n$ (including $n$ itself). We define $f(n)$ to be the length of the longest directed path in $T_n$ that starts at $n$. The main purpose of this paper is to determine an explicit combinatorial formula for the function $f(n)$.

\begin{ex}\label{1} The graph $T_{10}$ has the following structure:
$$
\begin{tikzcd}
{10=1010_{(2)}} \arrow{r}{} \arrow[swap]{d}{} & 9 = 1001_{(2)} \arrow{r}{} \arrow[swap]{d}{} & 7 = 111_{(2)} \\
6 = 110_{(2)} \arrow{r}{}& 5 = 101_{(2)} \arrow{r}{} & 3 = 11_{(2)}
\end{tikzcd}
$$
    
We have $f(10)=3$. The longest paths are $10 \rightarrow 6 \rightarrow 5 \rightarrow 3$ and  $10 \rightarrow 9 \rightarrow 5 \rightarrow 3$.
\end{ex}

The motivation for this problem originates in homotopy theory. For an object $X$ (such as a CW-complex) in the stable homotopy category, the Steenrod length (mod 2) of $X$ is defined to be at least the length of the longest chain of non-trivial Steenrod operations acting on its mod-2 cohomology. A closely related invariant is the ghost length of $X$, which counts the number of wedges of sphere needed to construct $X$ up to homotopy. Both invariants serve as measures of the complexity of $X$. Christensen~\cite{christensen1998ideals} proved that the ghost length is always bounded below by the Steenrod length. 

The combinatorial function $f(n)$ admits a natural interpretation in this context: at the prime $2$, Steenrod operations $Sq^{2^s}$ act subject to binary and degree constraints, and the directed graph defining $T_n$ encodes precisely the degrees in which such operations may act nontrivially, with an edge $n\to n-2^s$ corresponding to a potentially nonzero action of $Sq^{2^s}$ in degree $n$. Consequently, $f(n)$ measures the maximum length of a chain of nontrivial Steenrod operations that can be composed starting in degree $n$.

In the special case of real projective spaces $\mathbb{RP}^n$, the Steenrod length equals $g(n)+1$, where $g(n) = \max_{q \leq n} f(q)$. Computations show that for $2 \leq n \leq 19$, the Steenrod length and ghost length coincide. In his influential paper~\cite{vakil1999steenrod}, Vakil established a formula for $g(n)$ (his Theorem 2) using recursive methods, but left open the question of finding an explicit combinatorial formula for the underlying function $f(n)$. The present work provides a complete answer to this long-standing open problem by establishing explicit combinatorial formulas for $f(n)$. Our main results are stated below.

For each non-negative integer $n$, we define the \brown{binary class} of $n$ to be the set of all non-negative integers $m$ that agree with $n$ in their binary representation after all trailing $1$'s have been removed. This equivalence class has a unique minimal element, which we denote by $\overline{n}$. We express $\overline{n}$ in binary as alternating runs of $1$'s and $0$'s:
$$ \overline{n}= \underbrace{1\cdots 1}_A 0 \underbrace{ 0\cdots 0}_a \quad \underbrace{1\cdots 1}_B 0 \underbrace{ 0\cdots 0}_b \quad \cdots \quad  \underbrace{1\cdots 1}_C 0 \underbrace{ 0\cdots 0}_c\,_{(2)}.$$ Equivalently, we may represent $\overline{n}$ in \brown{bracket form}:
$$\overline{n}=[A,0_1,\cdots, 0_a, B, 0_1,\cdots, b_b, \cdots, C,0_1,\cdots , 0_c],$$
where $A,B,\dots,C \geq 1$, and the indices $0_i$ label the individual zero bits within each block $\underbrace{0\dots0}_a$ from left to right. In general, $\overline{n}$ has form $[\alpha_k, \dots, \alpha_1]$ $(\alpha_i \geq 0)$. If $\alpha_k >0$, we say that $\overline{n}$ is a \brown{$k$-dimensional binary class}. Define $$S(\overline{n}) = \sum_{j=1}^k j \alpha_j \qquad \text{and} \qquad \Delta^n = f(n) - S(\overline{n}). $$
 Finally, we say that $n$ is a \brown{Vakil number} with \brown{Vakil pair} $(a,k)$ if $\overline{n} = 2^{a+1} k$ for some integers $a \geq 0$ and $k \geq 1$ satisfying $a \leq k \leq 2a + 1$. 

Our main results are as follows: First, we establish a simple formula for $f(n)$ when $n$ is a Vakil number (Lemma~\ref{for_Vakil}). Next, we provide explicit and straightforward expressions for $f(n)$ that apply to a large family of binary classes, including all low-dimensional cases and many higher-dimensional ones (Theorems~\ref{main-1} and \ref{main-2}). Finally, we give a complete general formula that determines $f(n)$ for every non-negative integer $n$ (Theorem~\ref{main-0}).

\begin{lemm}\label{for_Vakil} 
Let $n$ be a Vakil number with Vakil pair $(a,k)$. Then $f(n)=k+\frac{a(a+1)}{2}$.
\end{lemm}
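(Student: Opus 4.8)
The plan is to work with the minimal representative $\overline n=2^{a+1}k$ and read the dynamics off the binary string. First I would record that the trailing $1$'s carried by a general member of the class are inert: every outgoing edge subtracts $2^s$ at a $0$-bit, hence acts only on positions at or above the lowest $0$, so the block of trailing $1$'s is never touched and never enables a new move. This gives $f(n)=f(\overline n)$, and it suffices to treat $\overline n=2^{a+1}k$. I would also reformulate a single edge as an operation on the set $P$ of positions of the $1$'s: subtracting $2^s$ replaces the lowest $1$ above $s$, at some position $t$, by $1$'s at positions $s,s+1,\dots,t-1$; equivalently, with $p=t-s\ge 1$, it replaces the part $t$ of $P$ by the parts $t-1,\dots,t-p$, provided those slots are empty. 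Thus $f(\overline n)$ is the maximal number of such moves needed to reach the all-ones ``staircase'' $\{0,1,\dots,r-1\}$, and $f(\overline n)=1+\max_{\text{child}}f(\text{child})$.

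The engine of the proof is an exact accounting of each move on two statistics: the number of ones $N=|P|$ and the weighted sum $W=\sum_{t\in P}t$. A move with parameter $p$ leaves $N$ fixed when $p=1$ (a \emph{shift}, lowering $W$ by $1$) and raises $N$ by $p-1$ when $p\ge 2$ (a \emph{split}). Summing over a path of length $M$ ending at the staircase with $r$ parts yields
\[
M=W_0-\binom{r}{2}+\sum_{\text{splits}}\Big((p-1)t-\binom{p+1}{2}+1\Big),\qquad r=N_0+\sum_{\text{splits}}(p-1),
\]
where $N_0,W_0$ are the statistics of $\overline n$; for $\overline n=2^{a+1}k$ one has $N_0=s_2(k)$ and $W_0=W(k)+(a+1)s_2(k)$ in terms of the binary expansion of $k$. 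Both bounds will be extracted from this identity (I have checked it reproduces $f(16)=5$ and $f(40)=8$).

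For the lower bound $f(\overline n)\ge k+\tfrac{a(a+1)}{2}$ I would exhibit an explicit schedule realizing the right-hand side: after a few preliminary shifts to clear room at the top, perform the splits that push the leading block down into the $a+1$ vacant low positions, and then bubble the resulting consecutive block down to the staircase one shift at a time, filling every empty slot. Concretely this is an induction on $a$: at $k=a$ the single move subtracting $2^{a}$ sends $(a,k)$ to the Vakil pair $(a-1,2a-1)$ and drops the target value by exactly $1$, while for $k>a$ one descends by one unit per move through intermediate (generally non-Vakil) configurations. A direct check that each scheduled move lowers the count in the identity by exactly $1$ and stays legal then gives a path of length precisely $k+\tfrac{a(a+1)}{2}$.

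The hard part is the matching upper bound, i.e.\ maximizing the right-hand side over all legal schedules. The difficulty is genuine: for odd $k>a$ the unique best child is not a Vakil number, so a naive induction on Vakil pairs cannot close, and one must bound $f$ globally. I would control the split sum by noting that every split at position $t$ is capped by the current top position, which never exceeds $P_{\max}=a+1+\lfloor\log_2 k\rfloor$, and that each unit of $p-1$ spent on a split inflates the penalty $\binom{r}{2}$; balancing these two effects is exactly where the hypothesis $a\le k\le 2a+1$ enters, pinning the maximiser. The cleanest formulation I expect to need is a monovariant $\Psi$ with $\Psi(2^{r}-1)=0$ on every sink, $\Psi$ decreasing by at least $1$ along every edge, and $\Psi(2^{a+1}k)=k+\tfrac{a(a+1)}{2}$ in the Vakil range, after which the lower-bound schedule certifies sharpness. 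Designing $\Psi$ so that no single split can gain more than one unit---which is precisely what fails for the cruder invariant $S(\overline n)$, since $f(\overline n)-S(\overline n)$ is already positive at $\overline n=16$---is the main obstacle, and I expect the range condition $a\le k\le 2a+1$ to be used exactly to guarantee it.
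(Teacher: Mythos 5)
Your reformulation of an edge as a move on the set of $1$-positions, and the resulting identity for the path length in terms of $W_0$, $\binom{r}{2}$ and the split contributions, are correct (I checked the bookkeeping), and your lower-bound schedule is plausible: the base case $k=a$ does send $(a,a)$ to the Vakil pair $(a-1,2a-1)$ with the target dropping by exactly $1$. But the proof is not complete: the matching upper bound is never established. You say yourself that the monovariant $\Psi$ with $\Psi$ decreasing by at least $1$ along every edge and $\Psi(2^{a+1}k)=k+\tfrac{a(a+1)}{2}$ is ``the main obstacle'' and that you ``expect'' the range condition $a\le k\le 2a+1$ to make it work --- that is a statement of intent, not an argument. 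Since $f(n)=\max$ over all paths, the upper bound is the entire content of the lemma beyond the explicit schedule, and as it stands the proposal proves only $f(n)\ge k+\tfrac{a(a+1)}{2}$.

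For comparison, the paper does not attack the maximization directly at all. It invokes Vakil's already-proved results (Theorem 2 and Corollary 3 of \cite{vakil1999steenrod}, restated as Theorem \ref{3}): $g(n)=\max_{q\le n}f(q)=\tfrac{p(p-1)}{2}+m$ in the proper form $n=m2^p+k$, together with the frequency table in which $2^a$ appears $a+1$ times. Summing the frequency table shows that the \emph{smallest} integer $n$ with $f(n)=l$ is exactly $2^{a+1}\bigl(l-\tfrac{a(a+1)}{2}\bigr)$ for $l$ in the range $\bigl[\tfrac{a(a+3)}{2},\tfrac{(a+1)(a+4)}{2}\bigr)$, which for $n=2^{a+1}k$ with $k\in[a,2a+1]$ forces $f(n)=k+\tfrac{a(a+1)}{2}$; both inequalities come for free because $g$ already packages the upper bound. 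If you want a self-contained proof along your lines you must actually construct $\Psi$ (or some equivalent global bound); otherwise the efficient route is to cite Vakil's formula for $g$ as the paper does.
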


\begin{theo}\label{main-1} Let $n$ be a number of $k$-dimensional binary class. Then $f(n)=S(\overline{n})+\Delta_k$, where $\Delta_k= 0,0,0,1,2,4,7$ for $k=1,2,3,4,5,6,7$, respectively.
\end{theo}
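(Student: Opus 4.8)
The plan is to prove the equivalent statement $f(\overline n)=S(\overline n)+\Delta_k$ for every reduced representative $\overline n$ of dimension $k\le 7$, by strong induction on the value of $\overline n$, reading the recursion $f(\overline n)=1+\max_{\text{moves}}f(n')$ straight off the graph. First I would record two facts that let me work entirely with reduced numbers. (i) $f$ is constant on binary classes: appending a trailing $1$ (the map $m\mapsto 2m+1$) shifts every bit up by one and leaves the set of admissible moves unchanged up to that global shift, so $f(n)=f(\overline n)$ and it suffices to treat reduced $\overline n$, with the move targets $n'$ replaced by $\overline{n'}$. (ii) The two statistics in the formula are class invariants with a clean reading: $k$ is the number of $0$-bits, and $S(\overline n)=\sum_{\text{1-bit }b}\#\{0\text{-bits below }b\}$ counts the pairs (a $1$ above a $0$), i.e. the $(1,0)$-inversions of the binary word. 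I would check this inversion identity against $S=\sum_j j\alpha_j$, since each $1$ sitting in entry $j$ lies above exactly $j$ zeros.

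The heart of the argument is to track how one move changes these invariants. A move is given by a $0$-bit at position $s$; letting $t>s$ be the nearest $1$ above it, $d:=t-s$ the number of zeros filled, $o_H$ the number of $1$'s above $t$, and $z_L$ the number of $0$'s below $s$, a short computation on the word yields
$$\Delta k \;=\; \begin{cases} d-1,& o_H\ge 1,\\ d,& o_H=0,\end{cases}\qquad \Delta S \;=\; d+(d-1)(o_H-z_L),$$
where $\Delta k,\Delta S$ are the decreases in $k$ and in $S$; the case split in $\Delta k$ records that the cleared top $1$ becomes an uncounted leading zero precisely when $o_H=0$. Substituting the inductive value $f(n')=S(\overline{n'})+\Delta_{k'}$ into the recursion reduces the entire theorem to the single identity
$$\max_{\text{moves}}\bigl(\Delta_{k'}-\Delta S\bigr)=\Delta_k-1,$$
to be proved as a ``$\le$'' for all moves (the upper bound on $f$) together with one equality-attaining move (the lower bound).

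For the upper bound I would exploit monotonicity: for fixed $d$, within the regime $o_H\ge1$ the quantity $\Delta_{k'}-\Delta S$ is non-increasing in $o_H$ and non-decreasing in $z_L$, while $o_H=0$ forces $z_L=k-d$; hence the maximum over all moves sits at $(o_H,z_L)\in\{(1,k-d),(0,k-d)\}$. This collapses an a priori unbounded search to the finite family indexed by $d\in\{1,\dots,k\}$, and the inequality becomes the two explicit bounds $\Delta_{k-d+1}-d-(d-1)(d-k+1)\le\Delta_k-1$ and $\Delta_{k-d}-d+(d-1)(k-d)\le\Delta_k-1$, which I would verify directly from the table $(\Delta_0,\dots,\Delta_7)=(0,0,0,0,1,2,4,7)$ for each $k\le 7$. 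The same computation shows the first failure occurs at $k=8$ (the $d=2$, $o_H=1$ term overshoots), which is exactly why a dimension-only constant cannot persist past dimension $7$. For the lower bound I would produce an equality move at every $\overline n$: the move with $d=1$, $o_H\ge 1$ keeps the dimension fixed ($k'=k$, $\Delta S=1$) and attains $\Delta_{k'}-\Delta S=\Delta_k-1$, and a short argument using reducedness shows such a move exists unless $\overline n=2^k$ is a single power of two; in that remaining shape Lemma~\ref{for_Vakil} applies, because $2^k$ is a Vakil number for a suitable pair $(a,m)$, giving $f(2^k)=k+\Delta_k$ outright and simultaneously pinning down the constants $\Delta_k$.

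The step I expect to be the main obstacle is the exact bookkeeping of $\Delta S$ and $\Delta k$ together with the monotonicity reduction: the formula for $\Delta k$ genuinely branches on whether a $1$ survives above the cleared bit, and $o_H$ is unbounded a priori, so the decisive realization is that the extremum of $\Delta_{k'}-\Delta S$ always occurs at $o_H\le 1$, $z_L=k-d$, turning an infinite optimization into the finite table check. Once that reduction is in place both bounds are routine, and the fact that $\Delta_k$ is a dimension-only invariant for $k\le 7$ is explained structurally: an optimal first move can always be chosen with $d=1$, leaving the dimension untouched.
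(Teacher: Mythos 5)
Your proposal is correct, and it takes a genuinely different route from the paper. The paper proves Theorem \ref{main-1} by classifying, for each dimension $k$ separately, the possible configurations at which a path first drops dimension (which bracket entries have been zeroed when the leading block is consumed), then maximizing over these finitely many ``exit shapes'' using Lemma \ref{8} and induction on the dimension; it writes this out only for $k\le 4$ and asserts that $k=5,6,7$ are ``similar,'' even though the number of exit shapes grows roughly like $2^{k-1}$. You instead run the one-step recursion $f(\overline n)=1+\max f(\overline{n'})$ and track how a single move changes the two class invariants; I checked your bookkeeping and it is right: a move with parameters $(d,o_H,z_L)$ decreases the dimension by $d-1$ (by $d$ when $o_H=0$) and decreases $S$ by $d+(d-1)(o_H-z_L)$, where stripping trailing $1$'s affects neither invariant. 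Your monotonicity observation (for $d\ge 2$ the objective is non-increasing in $o_H$ and non-decreasing in $z_L$, and $o_H=0$ forces $z_L=k-d$) legitimately collapses the optimization to the two families of bounds you state, and I verified that all $2k$ inequalities hold with the table $(\Delta_0,\dots,\Delta_7)=(0,0,0,0,1,2,4,7)$ for every $k\le 7$, with equality attained at $d=1$, $o_H\ge 1$; your existence argument for that move (it fails only for $\overline n=2^k$, where Lemma \ref{for_Vakil} takes over and also forces the values $\Delta_k$) is also correct. What your approach buys is a uniform, mechanical treatment that actually completes the cases $k=5,6,7$ left implicit in the paper, a structural explanation for why a dimension-only constant exists (an optimal first move can always be taken with $d=1$, leaving the dimension fixed), and a pinpointed reason for the breakdown at $k=8$; what the paper's approach buys is an explicit description of the longest paths themselves, which feeds into its later canonical-path machinery. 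The only places you defer work are the finite table check and the $\Delta S$, $\Delta k$ computation, both of which go through as stated.
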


For a real number $r$, we write $z\approx r$ to mean that $z$ is the largest integer not bigger than $r$. 

\begin{theo}\label{main-2} Let $n$ be a number of $k$-dimensional binary class $[\alpha_k, \dots, \alpha_1]$ $(k\geq 4)$. Set $m=\lfloor \log_2k \rfloor -1$. For all $\alpha_k \geq m$, we have 
$$f(n) = S(\overline{n}) + (2^m-1)2^h+\frac{(k-h)(k-h-1)}{2} -mk,$$ where $h\geq 1$ such that $(2^m-1)2^{h-1}+h+1 \approx k$, or equivalently, $k \approx (2^m-1)2^h+h+1$.
\end{theo}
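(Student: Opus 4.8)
The plan is to work throughout with the particle reformulation of the process. Reading each $1$ in the binary word of $\bar n$ as a particle, the move that subtracts $2^s$ at a zero bit $s$ whose nearest higher $1$ sits at position $t$ replaces the block $1\,0^{t-s}$ by $0\,1^{t-s}$; equivalently, the lowest $1$ of a run that has $\ell\ge 1$ empty slots below it is deleted and $j$ new ones ($1\le j\le \ell$) are placed in the $j$ slots immediately beneath its old position. Since only trailing $1$'s are frozen, I would first record that $f(n)=f(\bar n)$ and that the \emph{never split} strategy (always take $j=1$, i.e.\ slide every particle to the bottom) is legal and terminates at the value $2^N-1$; counting its moves as total displacement gives $\sum_p h_p-\binom N2$ for heights $h_p$ and particle number $N$, which one checks equals $S(\bar n)=\sum_j j\alpha_j$. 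Hence $f(\bar n)\ge S(\bar n)$ always, and the theorem becomes the statement that the \emph{splitting bonus} $\Delta^n=f(\bar n)-S(\bar n)$ equals $(2^m-1)2^h+\tfrac{(k-h)(k-h-1)}2-mk$ whenever the leading run is long enough ($\alpha_k\ge m$); in particular the bonus then depends on $k$ alone, consistent with Theorem \ref{main-1} for $4\le k\le 7$ (where $m=1$, so $\alpha_k\ge m$ is automatic).

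For the lower bound I would exhibit a cascade realising the bonus. The idea is that the $k$ zero-slots, fueled by the $m$ leading ones guaranteed by $\alpha_k\ge m$, can grow a run of roughly $2^m$ particles high in the word, which then slides; the threshold $h$ fixed by $(2^m-1)2^{h-1}+h+1\approx k$ is exactly the moment at which further doubling is no longer affordable within the available zeros and the process switches to pure sliding. Concretely I would perform $m$ nested maximal splits driven by the leading run, collapse the top of the word to a single tall run atop a shortened gap, recognise the resulting configuration as a Vakil number, and evaluate its contribution by Lemma \ref{for_Vakil}; adding back the untouched sliding of the remaining structure should reassemble $S(\bar n)+(2^m-1)2^h+\binom{k-h}2-mk$. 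The bookkeeping here is routine once the cascade is written down, with $(2^m-1)2^h$ the count produced by the doubling phase and $\binom{k-h}2-mk$ the signed adjustment coming from diverting the $m$ top particles out of the slide.

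For the upper bound I would argue by strong induction on $n$ via $f(\bar n)=1+\max_{\text{moves}}f(n')$, each $n'<n$ having a class evaluated by the induction hypothesis or by Lemma \ref{for_Vakil}. The heart is an exchange argument showing that an optimal play may be taken to begin with the cascade move above: any first move of a different split size, or one acting on a lower run before the leading run is spent, can be rewritten into a cascade step without shortening the path. The hypothesis $\alpha_k\ge m$ enters as precisely the condition under which the doubling phase reaches saturation; for $\alpha_k<m$ the same analysis produces a strictly smaller bonus (the single particle $\alpha_k=1$, $k\ge 8$ already exhibits this), which is why the clean $k$-only formula demands $\alpha_k\ge m$.

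I expect the upper bound to be the main obstacle. Lower bounds are explicit, but ruling out exotic split sequences requires controlling the two-dimensional recursion on configurations of the form $1^{\,j}0^{\ell}$ (a run above a gap), and in particular establishing the right convexity and monotonicity of the optimal split sizes so that the exchange argument closes. The most delicate point is the boundary case $\alpha_k=m$, where the cascade is only just affordable: there I would verify separately that the $h$ given by the displayed inequality is the correct switch-over index and that the rounding hidden in $\approx$ is consistent with the integrality of the move counts, matching the value already forced by Lemma \ref{for_Vakil} and by Theorem \ref{main-1} at the overlap $4\le k\le 7$.
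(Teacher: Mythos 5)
Your lower bound is sound in outline: descending from $[\alpha_k,\dots,\alpha_1]$ to the single-run class $[m,0_1,\dots,0_{k-1}]$ is legal by Lemma \ref{exist_path} precisely because $\alpha_k\geq m$, and evaluating that class through its Vakil number via Lemma \ref{for_Vakil} is exactly Corollary \ref{f_n_leq_log2i} combined with Lemma \ref{inequality_Delta}; your cascade is one explicit path realizing this. The genuine gap is the upper bound. You propose a strong induction with an exchange argument ruling out ``exotic split sequences,'' but you never carry that argument out -- you yourself flag the convexity and monotonicity of the optimal split sizes, and the boundary case $\alpha_k=m$, as unresolved. As written, nothing in the proposal excludes a strategy whose bonus exceeds $(2^m-1)2^h+\frac{(k-h)(k-h-1)}{2}-mk$, so the theorem is not proved.

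The missing idea is that no exchange argument is needed. The class $[\alpha_k,\dots,\alpha_1]$ is itself reachable \emph{from} the single-run class $[\sum_{j}\alpha_j,\,0_1,\dots,0_{k-1}]$ (Lemma \ref{exist_path} again, since the top entry dominates every partial sum), so Lemma \ref{inequality_Delta} gives $\Delta^{[\alpha_k,\dots,\alpha_1]}\leq\Delta^{[\sum_j\alpha_j,0_1,\dots,0_{k-1}]}$ for free. The paper then closes the sandwich with Lemma \ref{Delta_n_geq_log2i}: $\Delta^{[N,0_1,\dots,0_{k-1}]}$ is \emph{constant} for all $N\geq m=\lfloor\log_2k\rfloor-1$, proved by following the canonical path of Theorem \ref{18} down to the first Vakil number, so the upper and lower bounds coincide at $\Delta^{[m,0_1,\dots,0_{k-1}]}$, which Corollary \ref{f_n_leq_log2i} evaluates in closed form. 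If you want to salvage your approach, the stabilization of $\Delta$ on single-run classes is the lemma to prove in place of the exchange argument; it reduces the entire upper bound to a one-line application of Lemma \ref{inequality_Delta}.
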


Define $T_{\overline{n}}$ to be the graph obtained from $T_n$ by identifying each vertex with the minimal element of its binary class. We write $\overline{n} \leadsto \overline{n'}$ if there is a directed path from $\overline{n}$ to $\overline{n'}$ in $T_{\overline{n}}$, and say that $\overline{n}$ \brown{can reach} $\overline{n'}$. If $\overline{n} \leadsto \overline{n'} \leadsto \overline{n''}$, then we say that $\overline{n}$ is \brown{closer} to $\overline{n'}$ than to $\overline{n''}$.

\begin{theo}\label{main-0} Let $n$ be a non-negative integer. We have $f(n) = S(\overline{n}) + \Delta^{\widehat{n}}$, where $\widehat{n}$ is the closest Vakil number to $\overline{n}$ with Vakil pair $(a,k)$, $4|k$.
\end{theo}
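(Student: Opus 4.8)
\section*{Proof proposal for Theorem~\ref{main-0}}

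The plan is to prove the equivalent statement $\Delta^{n}=\Delta^{\widehat n}$ by descending induction along the reachability order $\leadsto$, using $f(n)=f(\overline n)$ together with the recursion $f(\overline n)=1+\max_{\overline n\to\overline{n'}}f(\overline{n'})$ on the reduced graph $T_{\overline n}$. First I would translate each edge of $T_{\overline n}$ into an operation on the bracket form $[\alpha_k,\dots,\alpha_1]$. Subtracting $2^{s}$ at a zero bit sitting directly above a block of ones (an entry $\alpha_i\ge 1$) realizes the \emph{transfer move} $(\alpha_i,\alpha_{i-1})\mapsto(\alpha_i-1,\alpha_{i-1}+1)$, with the modification $\alpha_1\mapsto\alpha_1-1$ when $i=1$; subtracting at an isolated zero instead \emph{merges} several entries. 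A direct computation shows that every transfer move lowers $S(\overline n)=\sum_j j\alpha_j$ by exactly $1$, whereas the merging moves change $S$ by a structure-dependent amount (and may even leave it unchanged), so they do not in general preserve $\Delta=f-S$. Since $f$ drops by at least $1$ along each edge and by exactly $1$ along each edge of a longest path, the crucial observation is that \emph{a dimension-preserving transfer move lying on a longest path preserves $\Delta$}, while a transfer move that drops the dimension (top entry $\alpha_k=1\mapsto0$) lowers $f$ by more than $1$ and is never optimal.

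Next I would settle the existence and uniqueness of $\widehat n$. The structural claim is that, within a fixed dimension, iterated transfer moves drive $\overline n$ toward the normal form $2^{a+1}k$, so the reachable $4\mid k$ Vakil numbers admit a unique nearest element under $\leadsto$; this is exactly where the divisibility $4\mid k$ is forced, since among the several $4\mid k$ Vakil numbers of a given dimension it is $\Delta$ (the invariant of the transfer dynamics) that selects the one actually reached. For instance, from $[2,0,0,0,0]=96$ the transfer moves give the longest path $[2,0,0,0,0]\to[1,1,0,0,0]\to[1,0,1,0,0]\to[1,0,0,1,0]\to[1,0,0,0,1]\to[1,0,0,0,0]=32$ of constant $\Delta=2$, ending at the $4\mid k$ Vakil number $\widehat{96}=32$ with pair $(2,4)$.

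With these tools the induction is clear in outline. The base case $\overline n=\widehat n$ is handled by Lemma~\ref{for_Vakil}, which supplies $\Delta^{\widehat n}$. For the inductive step I would produce an optimal successor $\overline{n'}$ via a dimension-preserving transfer move with $\widehat{n'}=\widehat n$; then $S(\overline{n'})=S(\overline n)-1$ and $f(\overline{n'})=f(\overline n)-1$, whence $\Delta^{n}=\Delta^{n'}=\Delta^{\widehat n}$. Two facts must be verified: (a) such a $\Delta$-preserving transfer move toward $\widehat n$ is \emph{optimal}, i.e.\ attains $f(\overline n)-1$; and (b) \emph{no} edge does better, i.e.\ $S(\overline{n'})+\Delta^{\widehat{n'}}\le S(\overline n)+\Delta^{\widehat n}-1$ for every successor $\overline{n'}$. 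In the ranges already resolved—small dimension $k\le 7$ and large leading entry $\alpha_k\ge m$—I would read off (a) and (b) from Theorems~\ref{main-1} and~\ref{main-2}; the remaining ranges follow by balancing, along each edge, the drop in $S$ against the change in $\Delta^{\widehat{\,\cdot\,}}$.

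The hardest part is step (b), the global upper bound: it says the merging moves and the dimension-dropping transfer moves can never beat the $\Delta$-preserving transfer move. This requires monotonicity of $\overline m\mapsto\Delta^{\widehat m}$ along $\leadsto$ together with precise bookkeeping of how a merge simultaneously decreases $S$ and may raise $\Delta^{\widehat{\,\cdot\,}}$. I expect the delicate case to be the low-dimensional boundary (dimension $4$ versus $5$), where no $4\mid k$ Vakil number is reachable downstream and one must instead invoke the constant values $\Delta_k$ of Theorem~\ref{main-1}; reconciling that boundary with the generic $4\mid k$ Vakil reduction so as to obtain a single uniform recursion for $F(\overline n)=S(\overline n)+\Delta^{\widehat n}$ is the main obstacle.
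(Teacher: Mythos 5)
There is a genuine gap, and it sits exactly where you say it does. Your plan hinges on steps (a) and (b): that some dimension-preserving transfer move attains $f(\overline n)-1$, and that no merging or dimension-dropping edge does better. You leave both unproven, proposing to ``read them off'' from Theorems \ref{main-1} and \ref{main-2} in special ranges and to handle the rest ``by balancing.'' But Theorems \ref{main-1} and \ref{main-2} only cover restricted families (small dimension, or large leading entry $\alpha_k$), and they do not give the per-edge comparison you need; in the general case your step (b) amounts to re-deriving from scratch the fact that a particular greedy path is a longest path. The paper does not prove this either --- it imports it wholesale from Vakil via Remark \ref{f_by_cano} ($f(n)$ equals the length of the \emph{canonical} path), and then Theorem \ref{18} shows that the canonical path out of a non-Vakil class consists precisely of dimension-preserving transfer moves until the first Vakil number $N'$ of the same dimension is reached. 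Combined with Lemma \ref{8} (any same-dimension path has length $\sum_j j(\alpha_j-\alpha_j')$, so $S$ drops by exactly $1$ per step), this yields $\Delta^{\overline n}=\Delta^{N'}$ with no optimality analysis of competing edges at all. Without invoking Remark \ref{f_by_cano} or an equivalent, your induction cannot close.

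A second, smaller gap: your $\widehat n$ is required to have Vakil pair with $4\mid k$, but the first Vakil number $N'$ reached by the transfer dynamics generally has $4\nmid k$. You assert that $\Delta$ ``selects'' the right $4\mid k$ representative, but the actual content needed here is Theorem \ref{23}: for consecutive Vakil pairs $(a_i,k_i)$ with $k_0\equiv 0\pmod 4$ one has $\Delta^{N_0}=\Delta^{N_1}=\Delta^{N_2}=\Delta^{N_3}$, while $\Delta^{N_4}$ genuinely differs; this is what lets the paper pass from $N'$ to the closest $4\mid k$ Vakil number $\widetilde n$ and is why the statement is phrased with $4\mid k$. Your example $[2,0,0,0,0]\leadsto[1,0,0,0,0]$ is consistent with this (there $96$ is itself the Vakil number $N_2$ with pair $(3,6)$ and $32=N_0$ with pair $(2,4)$), but the general step is not established in your sketch. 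Your bookkeeping of transfer moves and the base case via Lemma \ref{for_Vakil} are correct; the missing ingredients are the canonical-path input and the Theorem \ref{23} computation.
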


Although Theorem~\ref{main-0} provides the most general and comprehensive expression for $f(n)$, it is typically most efficient---in actual computations---to first apply Lemma~\ref{for_Vakil} and Theorems~\ref{main-1}--\ref{main-2} whenever possible. The procedure for identifying the appropriate $\widehat{n}$ is illustrated visually and step-by-step in the final section of the paper.

The paper is organized as follows. In Section~\ref{pre} we review Vakil's foundational results, formally introduce the notion of binary classes, present SageMath code used to computationally verify our main theorems, and establish several key technical lemmas and facts concerning canonical paths that are essential for computing $f(n)$. Section~\ref{proof} contains the complete proofs of the main results. Finally, Section~\ref{example} offers detailed worked examples demonstrating how to apply the main results in practice. To enhance clarity and accessibility, each subsection concludes with concrete illustrative examples that reinforce the definitions, lemmas, and propositions introduced therein.

\textbf{Acknowledgments:} 
The author is deeply grateful to Professor Minh-Ha Le for introducing him to this problem. He warmly acknowledges the Visiting Fellowship supported by MathCoRe and hosted by Professor Petra Schwer at Otto-von-Guericke-Universit\"at Magdeburg.

\section{Preliminaries}\label{pre}
\subsection{Review of Vakil's results} \label{Ravi result}
In this subsection we recall some key results from Vakil~\cite{vakil1999steenrod}.

\begin{lemm}[{\cite[Lemma 1]{vakil1999steenrod}}]\label{2}
Every non-negative integer $n$ can be uniquely written as $n = m 2^p + k$
with integers $m=:m(p)$, $p=:p(n)$, $k=:k(n)$ such that $p \geq 1$, $p-1 \leq m \leq 2p-1$, and $0 \leq k < 2^p$.
\end{lemm} 

\begin{defi}
The representation in Lemma \ref{2} is called the \brown{proper form} of $n$.  
\end{defi}

\begin{theo}[{\cite[Theorem 2, Corollary 3]{vakil1999steenrod}}]\label{3}
Set $g(n) = \max_{q\leq n} f(q)$. If $n=m2^p+k$ is the proper form of a non-negative integer $n$, then $g(n)=\frac{p(p-1)}{2}+m$.
The frequency table of $g(n)$ is a list of non-decreasing powers of $2$, where $2^a$ appears $a+1$ times ($a\geq 1$). 
\end{theo}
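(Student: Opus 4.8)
The plan is to prove the closed form $g(n)=\tfrac{p(p-1)}{2}+m$ first and then read off the frequency table as a counting corollary. Throughout I abbreviate $h(n)=\tfrac{p(p-1)}{2}+m$ for the proper form $n=m2^p+k$ supplied by Lemma \ref{2}, and I call $B_{m,p}=\{m2^p,\dots,(m+1)2^p-1\}$ the proper block of the pair $(m,p)$. The goal is the identity $g=h$, which I would obtain from two one-sided bounds on $f$ glued together by an elementary monotonicity property of $h$: the pointwise upper bound $f(q)\le h(q)$ for every $q$, and the block-start lower bound $f(m2^p)\ge\tfrac{p(p-1)}{2}+m$.

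First I would record the structural facts about $h$, which need nothing beyond Lemma \ref{2}. Since $h$ depends only on $(m,p)$ it is constant on each block $B_{m,p}$, and since the proper form is unique these blocks partition $\mathbb{Z}_{\geq 0}$. A direct computation shows they abut in the natural order — the block $(2p-1,p)$ ends at $p\,2^{p+1}-1$, immediately followed by $(p,p+1)$, while for fixed $p$ consecutive values of $m$ give adjacent intervals — and across every such junction $h$ rises by exactly $1$. Hence $h$ is non-decreasing. Granting the pointwise upper bound, monotonicity then gives $g(n)=\max_{q\le n}f(q)\le \max_{q\le n}h(q)=h(n)$; and granting the lower bound at the block-start $m2^p\le n$ gives $g(n)\ge f(m2^p)\ge h(m2^p)=h(n)$, so the two halves pinch $g=h$.

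For the lower bound I would exhibit an explicit longest path from each block-start — the canonical paths prepared in Section \ref{pre} — organised by the position $L$ of the leading bit, writing $n=2^L+r$. Here each move either leaves the leading bit in place and acts on the lower part exactly as a move of the smaller graph $T_r$, or clears the leading bit, dropping $L$ by one and spreading it into a run of ones that opens up fresh moves one level down. Iterating this ``spread-and-descend'' step, with the number of moves available increasing as the leading bit descends, is what I expect to accumulate the triangular total $\tfrac{p(p-1)}{2}$, the extra linear term $m$ coming from the binary digits of $m$; the bookkeeping here is routine once the recursion $f(2^L+r)=1+\max\bigl(\max_{r\to r'}f(2^L+r'),\ \max_s f(2^L-2^s+r)\bigr)$ is in place.

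The hard part will be the pointwise upper bound $f(q)\le h(q)$, precisely because a single move need not decrease $h$: a move that stays inside a block leaves $h$ unchanged, so the naive one-step induction $f(q)=1+\max_s f(q-2^s)\le 1+\max_s h(q-2^s)$ is too weak. I would instead run a strong induction on $q$ governed by the same leading-bit decomposition $q=2^L+r$, splitting moves into ``lower'' moves, in bijection with the moves of $T_r$, and ``leading-bit-clearing'' moves $q\mapsto 2^L-2^s+r$, and proving by induction on $L$ that a longest path can spend only a bounded budget at each leading-bit level, the budgets summing to exactly $h(q)$. Translating the proper-form constraint $p-1\le m\le 2p-1$ into the number of gaps available at each level, so that these per-level budgets sum correctly, is the delicate step I expect to be the real obstacle. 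A useful independent checkpoint is that $g$ must increase in unit steps, since any longest path from $q$ opens with an edge to some $q'<q$ with $f(q')=f(q)-1$, giving $g(q)\le g(q-1)+1$; as $h$ is likewise non-decreasing with unit steps and agrees with $g$ at every block-start, the two are forced to coincide. Finally the frequency table is immediate from $g=h$: for fixed $p$ the admissible $m=p-1,\dots,2p-1$ give $p+1$ consecutive values of $h$, each attained on exactly the $2^p$ integers of its block, and distinct $p$ contribute disjoint consecutively-stacked ranges; reading multiplicities in increasing order of value therefore yields, for each $p\ge 1$, exactly $p+1$ consecutive values of multiplicity $2^p$, which with $a=p$ is the assertion that $2^a$ occurs $a+1$ times.
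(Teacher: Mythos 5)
You should first note that the paper itself contains no proof of this statement: Theorem \ref{3} is imported verbatim from Vakil \cite{vakil1999steenrod} (his Theorem 2 and Corollary 3), and the present paper uses it as an input --- for instance, the proof of Lemma \ref{length_Vakil} runs in the opposite direction to your plan, deducing the block-start values $f(2^{a+1}k)=k+\frac{a(a+1)}{2}$ \emph{from} the frequency table. So your proposal must stand on its own, and as a self-contained argument it has genuine gaps. The scaffolding is correct: the blocks $B_{m,p}$ do partition $\mathbb{Z}_{\ge 0}$ (by uniqueness in Lemma \ref{2}), they abut in order with $h$ rising by exactly $1$ at each junction, and granted the two bounds, both the pinching argument and the frequency count ($p+1$ admissible values of $m$ for each $p$, each constant on a block of $2^p$ integers) are complete. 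But the two estimates carrying all the content are announced rather than proven. The pointwise bound $f(q)\le h(q)$ \emph{is} the substance of Vakil's theorem, and you say yourself that the per-level budget accounting is the unresolved obstacle: nothing in the proposal specifies the budget at leading-bit level $L$, or why a path cannot alternate lower moves and leading-bit-clearing moves to exceed it. Likewise the lower bound $f(m2^p)\ge\frac{p(p-1)}{2}+m$ is not ``routine bookkeeping'': it requires constructing a path of that exact length and verifying the spread-and-descend count against the constraint $p-1\le m\le 2p-1$, which is real work (essentially the canonical-path analysis of Section \ref{pre} and Theorem \ref{18}), none of which is carried out.

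Your proposed ``independent checkpoint'' is moreover logically insufficient as stated. From $g(q)\le g(q-1)+1$, unit steps of $h$ across junctions, and agreement $g=h$ at every block start, coincidence does \emph{not} follow: since $g$ is non-decreasing and $h$ is constant on each block, $g$ could rise from $h(q)$ to $h(q)+1$ strictly inside a block and stay there, and it would still agree with $h$ at the next block start, where $h$ catches up by $1$. Thus agreement at all block starts is consistent with $g=h+1$ on a terminal segment of every block, and ruling this out requires exactly the pointwise upper bound you deferred --- the checkpoint cannot substitute for it. In short: the frame (pinch $g$ between a block-start lower bound and a pointwise upper bound, then count blocks) is sound and the frequency corollary is correctly derived from $g=h$, but both essential inequalities remain unproven, so this is a plan for a proof rather than a proof.
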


\begin{table}[ht!]
\centering
\begin{tabular}{|c|l l l l l l l l l l l l l l l l l l l l|}
\hline
$n$     &0 & 1& 2& 3& 4& 5& 6& 7& 8& 9& l0& 11& l2& l3& l4&15&16&17&18&19\\
\hline
$f(n)$ & 0 & 0& 1& 0& 2& 1& 2& 0& 3& 2& 3& 1& 4& 2& 3&0&5&3&4&2\\
\hline
$g(n)$ & 0 & 0& 1& 1& 2& 2& 2& 2& 3& 3& 3& 3& 4& 4& 4&4&5&5&5&5\\
\hline
\end{tabular}\caption{Table of $f(n)$ and $g(n)$ for small $n$.}
\end{table}

\begin{table}[ht!]
\centering{
\begin{tabular}{|c|l l l l l l l l l l l l l l l|}
\hline
$s$ &0 & 1& 2& 3& 4& 5& 6& 7& 8& 9& l0& 11& l2& l3& l4\\
\hline
$\#\{n\mid g(n)=s\}$ & 2 & 2& 4& 4& 4& 8& 8& 8& 8& l6& 16& l6& l6& l6& 32\\
\hline
\end{tabular}}
\caption{Frequency table of $g(n)$ for small $n$.} 
\end{table}

\begin{defi}
Given a non-negative integer $n$, we can uniquely divide its binary representation into three parts $\alpha$, $\beta$, $\gamma$ as follows: $\gamma$ is the block of $1$'s at the end of $n$, and $l(\beta)-1 \leq \alpha \leq 2l(\beta)-1$ where $l(\beta)$ is the number of digits of $\beta$, and $\alpha$ is interpreted as an integer. For $n$ not of the form $2^t-1$, we define the \brown{canonical edge} from vertex $n$ to be the edge corresponding to the leftmost zero in the rightmost block of zeros in the part $\beta$. A canonical edge has the form $n \longrightarrow n-2^s$ for a unique integer $s$. We call this number the \brown{canonical index of $n$}, and denote it by $s(n)$. We call the path in $T_n$ consisting of canonical edges starting from $n$ the \brown{canonical path}.
\end{defi}

\begin{rema}[{\cite[page 420]{vakil1999steenrod}}]\label{f_by_cano} For $n$ not of the form $2^t - 1$, $f(n)$ equals the length of the canonical path in $T_n$.
\end{rema}

\begin{ex}\label{5} Let $n=473=111011001_{(2)}$. We have $\alpha=111_{(2)}=7$, $\beta = 01100$, $\gamma=1$, $s(n)=2$. The first canonical edge is $473 \longrightarrow 469$. The canonical path in $T_{473}$ is $473\longrightarrow 469 \longrightarrow 467 \longrightarrow 659 \longrightarrow 651 \longrightarrow 619 \longrightarrow 603 \longrightarrow 595 \longrightarrow 431 \longrightarrow 399 \longrightarrow 383 \longrightarrow 319 \longrightarrow 255 \longrightarrow 127$. Hence, $f(473)=13$.
\end{ex}

\subsection{Binary classes}
In this subsection, we introduce the notion of the \emph{binary class} of a non-negative integer and reduce the original problem to the corresponding problem on its binary class.

\begin{defi}
Let $n$ be a non-negative integer. The \brown{binary class} of $n$ is the set of all non-negative integers $m$ whose binary representation agrees with that of $n$ after removing any trailing sequence of consecutive $1$'s (if present). Each binary class contains a unique minimal element, which we denote by $\overline{n}$ and call the \emph{canonical representative} of the class.
\end{defi}

\begin{lemm}\label{f(n)=f(m)}
If $n$ and $m$ are in the same binary class, then $f(n)=f(m)$.
\end{lemm}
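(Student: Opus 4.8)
The plan is to reduce the statement to a single elementary step and then make that step transparent via a graph isomorphism. First I would observe that two integers lie in the same binary class precisely when one is obtained from the common minimum $\overline{n}$ by appending a (possibly empty) string of $1$'s, and that appending a single $1$ to the binary expansion of $x$ is exactly the operation $x\mapsto 2x+1$. Consequently every element of the class of $\overline{n}$ has the form $2^t\,\overline{n}+(2^t-1)$ for some $t\ge 0$, i.e. it is reached from $\overline{n}$ by iterating $x\mapsto 2x+1$. Since $n$ and $m$ share the same minimum $\overline{n}$, it suffices to prove the one-step identity $f(x)=f(2x+1)$ for every $x\ge 0$ and then iterate.

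To prove $f(x)=f(2x+1)$ I would build a length-preserving correspondence between the reachable part of $T_x$ and the reachable part of $T_{2x+1}$ using the map $\phi(v)=2v+1$. The key bookkeeping is on the zero-bits. Since $2v+1$ is the binary word of $v$ shifted left with a trailing $1$, bit $0$ of $2v+1$ is $1$, while for $i\ge 1$ bit $i$ of $2v+1$ equals bit $i-1$ of $v$. Hence the zero-bit positions of $2v+1$ are exactly $\{s+1 : s\text{ is a zero-bit of }v\}$, and for each such $s$ the corresponding outgoing edge satisfies
$$ (2v+1)-2^{s+1}=2(v-2^s)+1=\phi(v-2^s). $$
This identity says the outgoing edges of $\phi(v)$ are in bijection with those of $v$, compatibly with $\phi$. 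Moreover, since bit $0$ of $2v+1$ is never a zero-bit, no outgoing edge ever subtracts $2^0$, so every vertex reachable from $2x+1$ stays odd, i.e. lies in the image of $\phi$.

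From here I would argue that $\phi$ restricts to an isomorphism of directed graphs from the subgraph of $T_x$ reachable from $x$ onto the subgraph of $T_{2x+1}$ reachable from $2x+1$: it is injective, it sends the root $x$ to the root $2x+1$, and by the displayed identity it both preserves and reflects edges; an induction on path length (using that the edge correspondence is a bijection at every vertex) gives surjectivity onto the reachable set. Because the edges strictly decrease the integer value, both graphs are finite acyclic, so the longest path is well-defined, and an isomorphism of rooted directed graphs carries directed paths from the root to directed paths from the root bijectively while preserving their lengths. Hence the longest path has the same length on both sides, giving $f(x)=f(2x+1)$; iterating $t$ times yields $f(m)=f(\overline{n})=f(n)$.

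The only genuinely delicate point is the zero-bit bookkeeping: one must check that subtracting $2^s$ (when bit $s$ of $v$ is $0$) behaves correctly under the left shift and that no spurious edges appear on the $2x+1$ side. The displayed identity $(2v+1)-2^{s+1}=2(v-2^s)+1$, together with the observation that bit $0$ of every vertex reachable from $2x+1$ remains $1$, dispatches both concerns, after which the argument is purely formal. Before writing it up I would sanity-check the correspondence against Example~\ref{1} and against the values $f(2)=f(5)=f(11)=1$ read off the table.
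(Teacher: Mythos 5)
Your proposal is correct and follows essentially the same route as the paper: both rest on the observation that every move $v\to v-2^s$ only involves a zero bit and the bits to its left, so the trailing block of $1$'s is inert and can be stripped without changing the graph. You simply make this precise by exhibiting the shift map $v\mapsto 2v+1$ as an explicit isomorphism of reachable subgraphs, which is a more careful write-up of the same idea.
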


\begin{proof}
	First, we see that each step $n \to n-2^s$ can be performed on the binary representation of $n$ as follows:
\begin{enumerate}
\item Define the region from $0$ at position $s$ to the first $1$ on the left.
\item Interchange all $1$'s to $0$'s and $0$'s to $1$'s in this region.
\end{enumerate}

Indeed, suppose that $n = 2^a + \cdots + 2^b + 2^c + \cdots + 2^d$ with $a > \cdots > b \geq s > c > \cdots > d$.  If $s < b$, then $n - 2^s = 2^a + \cdots + 2^{b-1} + \cdots + 2^s + 2^c + \cdots + 2^d$.  If $s = b$, we just need to remove $2^b$ from $n$.  

Second, since the tail $1\cdots1$ in the binary representation has no $0$, deleting the tail $1\cdots1$ from the binary form of $n$ does not change the value of $f(n)$.
\end{proof}

\begin{defi}
Let 
$$
n = \underbrace{1\cdots 1}_A 0 \underbrace{0\cdots 0}_a \quad 
\underbrace{1\cdots 1}_B 0 \underbrace{0\cdots 0}_b \quad 
\cdots \quad 
\underbrace{1\cdots 1}_C 0 \underbrace{0\cdots 0}_c \underbrace{1\cdots 1}_D\,_{(2)}.
$$
We rewrite $n$ in \brown{bracket form} as
$$
n = [A, 0_1, \dots, 0_a, B, 0_1, \dots, 0_b, \dots, C, 0_1, \dots, 0_c] D.
$$
Here $A, B, \dots, C \geq 1$, $D \geq 0$, the $0_i$ between $A$ and $B$ denote the individual zeroes in the block $\underbrace{0\dots 0}_a$ counted from left to right, and similarly for the other blocks. We call $D$ the \brown{tail} of $n$ and denote it by $t_n$. When $t_n = 0$, we omit writing the tail. 

We say that a binary class has \brown{dimension $k$} if it has the form
$$
[0, \dots, 0, \alpha_k, \alpha_{k-1}, \dots, \alpha_1] = [\alpha_k, \alpha_{k-1}, \dots, \alpha_1] \quad (\alpha_i \in \mathbb{Z}_{\geq 0},\ \alpha_k \geq 1).
$$
\end{defi}

Let $T_{\overline{n}}$ be the graph obtained from $T_n$ by replacing each vertex by the minimum number in its binary class. We call $T_{\overline{n}}$ the \brown{binary class graph} of $T_n$. By Lemma \ref{f(n)=f(m)}, we just need to study $T_{\overline{n}}$. Each vertex of $T_{\overline{n}}$ can be represented in integer, binary, or bracket form.

\begin{ex}\label{6}
In Example \ref{5}, after removing all trailing $1$'s from $111011001_{(2)}$, we obtain $11101100_{(2)} = 236$. Hence, $\overline{473} = 236$.  
In bracket form, we have $473 = [3,2,0_1]1$, $t_{473} = 1$, and $236 = [3,2,0_1]$, $t_{236} = 0$.  
The binary class has dimension $3$. We see that $f(236) = 13 = f(473)$, with the canonical path in $T_{\overline{236}}$ given as follows:
\begin{itemize}
    \item in integer form: $236 \to 234 \to 116 \to 114 \to 56 \to 52 \to 50 \to 24 \to 20 \to 18 \to 8 \to 6 \to 2 \to 0$,
    \item in binary form: $11101100_{(2)} \to 11101010_{(2)} \to 1110100_{(2)} \to 1110010_{(2)} \to 111000_{(2)} \to 110100_{(2)} \to 110010_{(2)} \to 11000_{(2)} \to 10100_{(2)} \to 10010_{(2)} \to 1000_{(2)} \to 110_{(2)} \to 10_{(2)} \to 0_{(2)}$,
    \item in bracket form: $[3,2,0] \to [3,1,1] \to [3,1,0] \to [3,0,1] \to [3,0,0] \to [2,1,0] \to [2,0,1] \to [2,0,0] \to [1,1,0] \to [1,0,1] \to [1,0,0] \to [2] \to [1] \to [0]$.
\end{itemize}

The binary class graph $T_{\overline{10}}$ of $T_{10}$ from Example \ref{1} is
$$
\begin{tikzcd}
{10=1010_{(2)}} = [1,1] \arrow{r}{} \arrow[swap]{d}{} & 4 = 100\cancel{1}_{(2)} = [1,0] \arrow{r}{} \arrow[swap]{d}{} & 0 = \cancel{111}_{(2)} = [0] \arrow[equal]{d} \\
6 = 110_{(2)} = [2] \arrow{r}{}& 2 = 10\cancel{1}_{(2)} = [1] \arrow{r}{} & 0 = \cancel{11}_{(2)} = [0]
\end{tikzcd}
$$
\end{ex}

\subsection{A program for $f(n)$}
Based on Remark \ref{f_by_cano}, we implemented a SageMath program \cite{SageMath} to compute $f(n)$. Here, \texttt{r(n)} returns the minimum number in the binary class $\overline{n}$, \texttt{b(n)} returns the part $\beta$, \texttt{s(n)} returns the canonical index $s(n)$, and \texttt{f(n)} returns the value $f(n)$. For example, \texttt{f(473)} returns $13$.

\begin{verbatim}
def r(n):
    B = '0' + bin(n)[2:]
    while B[-1] == '1':
        B = B[:-1]
    return int(B, 2)

def b(n):
    L = len(bin(n)[2:]); i = 0
    while i < L:
        if i-1 <= n // 2**i <= 2*i - 1:
            break
        else:
            i = i + 1
    return bin(n)[2:][L-i:]

def s(n):
    i = 0; L = len(b(n)); B = '0' + b(n)
    while B[-1] == '0' and i < L:
        i = i + 1; B = B[:-1]
    return i - 1

def f(n):
    l = 0; n = r(n)
    while n > 0:
        l = l + 1; n = r(n - 2**s(n))
    return l
\end{verbatim}

\subsection{Key lemmas}
We need the following lemmas to compute $f(n)$.

\begin{lemm}\label{8}
Let $[\alpha_k',\dots,\alpha_1'] \in T_{[\alpha_k,\dots,\alpha_1]}$ $(\alpha_k,\alpha_k' \geq 1)$. The length of any path from $[\alpha_k,\dots,\alpha_1]$ to $[\alpha_k',\dots,\alpha_1']$ is $\sum\limits_{j=1}^k j(\alpha_j - \alpha_j')$.
\end{lemm}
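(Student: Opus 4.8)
The plan is to show that $S$ is an exact grading on each dimension stratum of $T_{\overline{n}}$: precisely, that every edge joining two vertices of the same dimension decreases $S(\overline{m})=\sum_{j} j\alpha_j$ by exactly $1$. Granting this, if $[\alpha_k',\dots,\alpha_1']$ is reached from $[\alpha_k,\dots,\alpha_1]$ by a path all of whose vertices have dimension $k$, then telescoping gives length $=S([\alpha_k,\dots,\alpha_1])-S([\alpha_k',\dots,\alpha_1'])=\sum_{j=1}^k j(\alpha_j-\alpha_j')$, independent of the path. So the first task is to reduce an arbitrary path to one lying entirely in dimension $k$, and the second is the local computation $\Delta S=-1$.

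For the reduction I would first record the combinatorial identity that the dimension of $\overline{m}$ equals the number of $0$'s in the binary expansion of $\overline{m}$: each bracket entry is either a block of $1$'s, which I pair with its mandatory trailing $0$, or an extra $0$, and this pairs bracket entries bijectively with the zero digits. Next I analyze a single edge using the digit description from the proof of Lemma~\ref{f(n)=f(m)}: flipping the $0$ at position $s$ turns the lowest $1$ above $s$, say at position $b$, into a $0$ and fills positions $s,\dots,b-1$ (that is, $i:=b-s$ zeros) with $1$'s; passing to the class minimum only deletes trailing $1$'s and so leaves the number of $0$-digits unchanged. Hence an edge changes the count of zero digits, and therefore the dimension, by exactly $1-i\le 0$. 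Thus dimension never increases along an edge, and it is preserved precisely when $i=1$. Consequently a path between two dimension-$k$ vertices can never dip below dimension $k$ and return, so all of its vertices have dimension $k$ and every one of its edges has $i=1$.

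It remains to prove that an $i=1$ edge decreases $S$ by exactly $1$. Such an edge is the swap of an adjacent pattern $10\to 01$: bit $b$ falls from $1$ to $0$ and bit $b-1$ rises from $0$ to $1$ (with a reduction removing a single trailing $1$ in the boundary case $b-1=0$). I would check that this swap transfers one unit from bracket-position $j$ to bracket-position $j-1$, where $j$ is the bracket-position of the block whose lowest $1$ is being moved, while leaving every other bracket entry and, crucially, every other bracket-position unchanged; in the bottom case $b-1=0$ it simply deletes one unit from position $1$. In either case $S$ drops by $(j-1)-j=-1$ (or by $-1$ directly), which is the desired $\Delta S=-1$, after which the telescoping conclusion is immediate.

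The main obstacle is the last step's bookkeeping. The swap interacts with the local configuration in several ways: whether the block being trimmed has size $1$ or $\ge 2$, whether the gap of $0$'s below it has length $1$ (so the moved $1$ merges into the next block) or length $\ge 2$ (so it becomes a new singleton block), and whether the swap occurs at the very bottom and triggers a reduction. Each case reshuffles the zero digits near position $b$. The delicate point common to all of them, which I would isolate as the heart of the argument, is that the number of bracket entries strictly between the swap site and any higher block is invariant under the swap; this is what guarantees that the blocks above retain their bracket-position, so that the only net effect on $S$ is the single unit sliding down by one position.
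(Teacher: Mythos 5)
Your proposal is correct, and its core coincides with the paper's: both arguments rest on the fact that every dimension-preserving edge acts on bracket forms as a unit transfer $[\dots,\beta_j,\beta_{j-1},\dots]\to[\dots,\beta_j-1,\beta_{j-1}+1,\dots]$ (or deletes a unit from position $1$). Where you differ is in how each side handles the two halves of the work. The paper simply asserts this edge description ("hence, each step in a path \dots has form \dots") and then spends its effort on the count, encoding the steps as monomial multiplications $x_{j-1}/x_j$ and $1/x_1$, solving the resulting triangular linear system for the step-counts $t_j$, and summing to get $\sum_j j(\alpha_j-\alpha_j')$. You instead prove the edge description honestly — dimension equals the number of zero digits, an edge changes it by $1-i\le 0$, so a path between two dimension-$k$ vertices consists entirely of adjacent swaps $10\to 01$ — and then finish by telescoping the potential $S$, which is equivalent to but lighter than the paper's linear algebra. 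Your justification of the reduction is a genuine improvement in rigor over the paper's one-word "hence." As for the case analysis you flag as the main obstacle: it can be short-circuited entirely by observing that $S(\overline{m})$ equals the number of pairs (a $1$-digit, a $0$-digit to its right) in the binary form of $\overline{m}$; an adjacent swap $10\to 01$ removes exactly one such pair, and deleting trailing $1$'s (which have no $0$'s to their right) removes none, so $\Delta S=-1$ with no bookkeeping about block sizes, merging, or bracket positions.
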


\begin{proof}
Since $\alpha_k, \alpha_k' \geq 1$, the two classes have the same dimension. Hence, each step in a path from $[\alpha_k,\dots,\alpha_1]$ to $[\alpha_k',\dots,\alpha_1']$ has the form
$$
[\beta_k,\dots,\beta_j,\beta_{j-1},\dots,\beta_1] \to [\beta_k,\dots,\beta_j-1,\beta_{j-1}+1,\dots,\beta_1].
$$
Consider $\beta_j$ as the exponent of $x_j$ in the monomial $x_k^{\beta_k} \cdots x_1^{\beta_1}$. Then each step that does not change the dimension of the binary class is equivalent to multiplication by one of $x_j / x_{j+1}$ for $j \in [1,k-1]$, or by $1/x_1$.  

If
$$
[\alpha_k,\dots,\alpha_1] \stackrel{t \text{ times}}{\longrightarrow} [\alpha_k',\dots,\alpha_1'],
$$
then $t = t_1 + \cdots + t_k$, where $t_j \geq 0$ and
$$
\prod_{j=1}^k x_j^{\alpha_j} \cdot \left( \frac{x_{k-1}}{x_k} \right)^{t_k} \cdots \left( \frac{x_1}{x_2} \right)^{t_2} \left( \frac{1}{x_1} \right)^{t_1} = \prod_{j=1}^k x_j^{\alpha_j'}.
$$
Or equivalently,
\begin{align*}
\alpha_k - t_k &= \alpha_k', \\
\alpha_{k-1} + t_k - t_{k-1} &= \alpha_{k-1}', \\
\alpha_{k-2} + t_{k-1} - t_{k-2} &= \alpha_{k-2}', \\
&\vdots \\
\alpha_2 + t_3 - t_2 &= \alpha_2', \\
\alpha_1 + t_2 - t_1 &= \alpha_1'.
\end{align*}
We have
$$
t = \sum_{j=1}^k t_j = k t_k + \sum_{j=1}^{k-1} j (t_j - t_{j+1}) = \sum_{j=1}^k j (\alpha_j - \alpha_j').
$$
Since $t$ is the length of the path $[\alpha_k,\dots,\alpha_1] \to \cdots \to [\alpha_k',\dots,\alpha_1']$, the conclusion follows.
\end{proof}

Suppose that $\overline{N} = [\alpha_k, \dots, \alpha_1]$. Set $S(N) = \sum\limits_{j=1}^k j \alpha_j$ and $\Delta^N = f(N) - S(N)$. If there is a path from $\overline{N}$ to $\overline{N'}$ in $T_{\overline{N}}$, we say that $\overline{N}$ \brown{can reach} $\overline{N'}$ and write $\overline{N} \leadsto \overline{N'}$. If $\overline{N} \leadsto \overline{N'} \leadsto \overline{N''}$, we say that $\overline{N}$ is \brown{closer} to $\overline{N'}$ than to $\overline{N''}$.
 
\begin{lemm}\label{exist_path}
$[\alpha_k,\dots,\alpha_1] \leadsto [\beta_k,\dots,\beta_1]$ if and only if 
$\sum\limits_{j=i}^k \alpha_j \geq \sum\limits_{j=i}^k \beta_j$ for all $1 \leq i \leq k$.
\end{lemm}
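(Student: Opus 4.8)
The plan is to carry out the whole argument inside the monomial model from the proof of Lemma \ref{8}: a $k$-dimensional class $[\gamma_k,\dots,\gamma_1]$ is encoded by the monomial $\prod_{j=1}^k x_j^{\gamma_j}$, and a single edge of $T_{\overline{n}}$ is, at this level, either a slide of one unit from slot $j$ to slot $j-1$ (multiplication by $x_{j-1}/x_j$, for $2\le j\le k$) or a deletion of one unit from slot $1$ (multiplication by $1/x_1$). Before using this I would record the easy but essential fact that the dimension, that is the number of zeros in the binary form, never increases along an edge: from the description of a step $n\mapsto n-2^s$ in the proof of Lemma \ref{f(n)=f(m)}, flipping a block $1\,0\cdots0$ into $0\,1\cdots1$ replaces several zeros by a single one, and deleting the trailing ones does not create zeros. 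Since both endpoints $[\alpha_k,\dots,\alpha_1]$ and $[\beta_k,\dots,\beta_1]$ are taken to be $k$-dimensional (so $\alpha_k,\beta_k\ge 1$), any path between them is forced to remain in dimension $k$, hence uses only the two move types above and never the dimension-dropping move located at a lone zero.

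For the forward implication I would track the suffix sums $\Sigma_i(\gamma):=\sum_{j=i}^k\gamma_j$. A direct case check shows that $\Sigma_i$ is non-increasing under every admissible move: sliding a unit from slot $j$ to slot $j-1$ leaves $\Sigma_i$ unchanged when $j>i$ and lowers it by one when $j=i$, while deleting a unit at slot $1$ lowers only $\Sigma_1$. Consequently, whenever $[\alpha_k,\dots,\alpha_1]\leadsto[\beta_k,\dots,\beta_1]$ we must have $\Sigma_i(\alpha)\ge\Sigma_i(\beta)$ for every $1\le i\le k$, which is exactly the stated necessary condition.

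For the converse I would construct an explicit path greedily from the top slot downward. Assuming $\Sigma_i(\alpha)\ge\Sigma_i(\beta)$ for all $i$, process $j=k,k-1,\dots,2$ in turn: once slots $k,\dots,j+1$ have been fixed to their target values, conservation of units (nothing is deleted until the very end) shows that slot $j$ currently holds $\Sigma_j(\alpha)-\Sigma_{j+1}(\beta)$ units, so sliding the non-negative surplus $\Sigma_j(\alpha)-\Sigma_j(\beta)$ of them into slot $j-1$ leaves exactly $\beta_j$ in slot $j$. Finally I delete the remaining surplus $\Sigma_1(\alpha)-\Sigma_1(\beta)\ge 0$ units at slot $1$. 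Every intermediate slide is legal because the quantity being moved is non-negative, and the dimension never drops because slot $k$ is only reduced down to $\beta_k\ge 1$.

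The one delicate point, which I would take care to spell out, is the reduction in the first paragraph: that a path joining two $k$-dimensional classes cannot temporarily leave dimension $k$, so that the clean two-move model applies and the lone-zero moves, which genuinely do lower the dimension (for instance $[3,0,0]\leadsto[2,2]$, a reach that fails the suffix-sum test), are automatically excluded. Once this monotonicity of the dimension is in hand, both directions reduce to a standard majorization/flow argument, with the non-increasing suffix sums $\Sigma_i$ making each half transparent.
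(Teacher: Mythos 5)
Your proof is correct and follows essentially the same route as the paper: both arguments work inside the two-move model set up in the proof of Lemma~\ref{8}, where the move counts $t_i$ are forced to equal the suffix-sum differences $\sum_{j=i}^{k}\alpha_j-\sum_{j=i}^{k}\beta_j$, so reachability is equivalent to their non-negativity. You additionally make explicit two points the paper leaves implicit --- that the dimension cannot drop along a path between two $k$-dimensional classes (so the lone-zero moves are excluded), and that the multiset of moves admits a valid ordering via your greedy top-down construction --- which tightens the write-up but is not a different argument.
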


\begin{proof}
In the proof of Lemma \ref{8}, $[\alpha_k,\dots,\alpha_1] \leadsto [\beta_k,\dots,\beta_1]$ if and only if there exist non-negative integers $t_i \geq 0$ ($i=1,\dots,k$) satisfying
\begin{align*}
\alpha_k - t_k &= \beta_k, \\
\alpha_{k-1} + t_k - t_{k-1} &= \beta_{k-1}, \\
\alpha_{k-2} + t_{k-1} - t_{k-2} &= \beta_{k-2}, \\
&\vdots \\
\alpha_2 + t_3 - t_2 &= \beta_2, \\
\alpha_1 + t_2 - t_1 &= \beta_1.
\end{align*}
This system has a solution in non-negative integers if and only if 
$t_i = \sum\limits_{j=i}^k \alpha_j - \sum\limits_{j=i}^k \beta_j \geq 0$ for all $i = 1,\dots,k$.
\end{proof}

\begin{lemm}\label{inequality_Delta}
If $[\alpha_k,\dots,\alpha_1] \leadsto [\beta_k,\dots,\beta_1]$, then 
$\Delta^{[\alpha_k,\dots,\alpha_1]} \geq \Delta^{[\beta_k,\dots,\beta_1]}$.
\end{lemm}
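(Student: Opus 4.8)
The plan is to combine additivity of lengths along a concatenated path with the characterization of $f$ as the length of a longest path. Suppose $[\alpha_k,\dots,\alpha_1]\leadsto[\beta_k,\dots,\beta_1]$. I would first produce an explicit directed path $P$ from $[\alpha_k,\dots,\alpha_1]$ to $[\beta_k,\dots,\beta_1]$ whose length equals $S([\alpha_k,\dots,\alpha_1])-S([\beta_k,\dots,\beta_1])$, and then glue to it a longest path leaving $[\beta_k,\dots,\beta_1]$.

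For the path $P$, I would reuse the elementary moves of Lemma \ref{8} and Lemma \ref{exist_path}. By Lemma \ref{exist_path} the reachability is witnessed by the non-negative integers $t_i=\sum_{j=i}^{k}\alpha_j-\sum_{j=i}^{k}\beta_j$, and performing $t_i$ moves of the corresponding type ($x_j/x_{j+1}$ or $1/x_1$) realizes a path $P$ inside $T_{\overline{n}}$. Each such move lowers $S$ by exactly $1$, so exactly as in the computation in the proof of Lemma \ref{8},
$$\ell(P)=\sum_{i=1}^{k}t_i=\sum_{j=1}^{k}j(\alpha_j-\beta_j)=S([\alpha_k,\dots,\alpha_1])-S([\beta_k,\dots,\beta_1]).$$
When $\beta_k\ge 1$ this is literally Lemma \ref{8}; the only new point is that these moves remain genuine edges, and still change $S$ one unit at a time, even when the top coordinate is lowered from $1$ to $0$.

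Now let $Q$ be a longest path starting at $[\beta_k,\dots,\beta_1]$, so that $\ell(Q)=f([\beta_k,\dots,\beta_1])$. The graph $T_{\overline{n}}$ is acyclic: from the minimal representative $\overline{n}$ every edge leads to $\overline{\,\overline{n}-2^s\,}\le \overline{n}-2^s<\overline{n}$, so the representative strictly decreases. Hence the concatenation of $P$ and $Q$ is again a directed path out of $[\alpha_k,\dots,\alpha_1]$, and by the definition of $f$,
$$f([\alpha_k,\dots,\alpha_1])\ \ge\ \ell(P)+\ell(Q)\ =\ S([\alpha_k,\dots,\alpha_1])-S([\beta_k,\dots,\beta_1])+f([\beta_k,\dots,\beta_1]).$$
Rearranging and recalling $\Delta^{N}=f(N)-S(N)$ gives $\Delta^{[\alpha_k,\dots,\alpha_1]}\ge\Delta^{[\beta_k,\dots,\beta_1]}$, as desired.

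The one step that needs genuine care is the length formula for $P$ when the target has strictly smaller dimension, i.e. $\beta_k=0$: Lemma \ref{8} is stated only for two classes of the same dimension. I would close this gap by checking directly on binary representations that the move lowering the leading coordinate, $[1,\gamma_{k-1},\dots,\gamma_1]\to[0,\gamma_{k-1}+1,\dots,\gamma_1]$, is indeed an edge of $T_{\overline{n}}$ and still decreases $S$ by exactly $1$. Granting this, the displayed computation of $\ell(P)$ holds with no case split, and the argument applies uniformly. I expect this dimension-drop verification, rather than the concatenation idea itself, to be the main obstacle.
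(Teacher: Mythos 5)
Your proof is correct and follows essentially the same route as the paper's: the paper's entire argument is the one-line observation that $f([\alpha_k,\dots,\alpha_1])\geq \sum_{j=1}^k j(\alpha_j-\beta_j)+f([\beta_k,\dots,\beta_1])$ by Lemma \ref{8}, which is exactly your concatenation of the explicit path $P$ with a longest path $Q$. Your added care about the dimension-drop case $\beta_k=0$ addresses a point the paper silently glosses over, but it does not change the approach.
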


\begin{proof}
By Lemma \ref{8}, we have
$$
f([\alpha_k,\dots,\alpha_1]) \geq \sum_{j=1}^k j(\alpha_j - \beta_j) + f([\beta_k,\dots,\beta_1]).
$$
This implies the desired conclusion.
\end{proof}

The next lemma gives the formula for $f(n)$ for certain special numbers $n$.
\begin{lemm}\label{length_Vakil}
For $n = 2^{a+1} k$ where $k \in [a, 2a+1]$, we have $f(n) = k + \frac{a(a+1)}{2}.$
\end{lemm}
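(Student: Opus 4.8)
The plan is to pin $f(n)$ between the elementary bound $f(n)\le g(n)$ and a matching lower bound coming from an explicit descent, so that the value $\tfrac{a(a+1)}{2}+k$ is forced. For the \textbf{upper bound} I only need to evaluate $g(n)$, since $g(n)=\max_{q\le n}f(q)\ge f(n)$. For $n=2^{a+1}k$ with $a\le k\le 2a+1$ I would check that the proper form of Lemma \ref{2} is $n=k\cdot 2^{a+1}+0$, i.e.\ $p=a+1$, $m=k$ and remainder $0$: the constraint $p-1\le m\le 2p-1$ is exactly the hypothesis $a\le k\le 2a+1$, and $0\le 0<2^{p}$ is automatic. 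Theorem \ref{3} then yields $g(n)=\tfrac{p(p-1)}{2}+m=\tfrac{a(a+1)}{2}+k$, so $f(n)\le \tfrac{a(a+1)}{2}+k$.

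For the \textbf{lower bound} I would use Remark \ref{f_by_cano}: $f(n)$ is the length of the canonical path. Because the remainder is $0$, the part $\beta$ of $n=2^{a+1}k$ is the all-zero block in positions $0,\dots,a$, so its canonical index is $s(n)=a$ and the first canonical edge lands at $n'=n-2^{a}=2^{a}(2k-1)$; hence $f(n)=1+f(n')$. Computing the proper form of $n'$ in the two cases $k=a$ (where $n'=(2a-1)2^{a}$, so $p'=a$, $m'=2a-1$) and $a<k\le 2a+1$ (where $n'=(k-1)2^{a+1}+2^{a}$, so $p'=a+1$, $m'=k-1$, remainder $2^{a}$), one finds in both cases $g(n')=g(n)-1$. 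This is consistent with $f(n)=1+f(n')$ and reduces everything to the single inequality $f(n')\ge \tfrac{a(a+1)}{2}+k-1$.

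The difficulty is precisely this lower bound, and I expect it to be the heart of the proof. The successor $n'=2^{a}(2k-1)$ is \emph{not} a Vakil number, and the canonical path issuing from a Vakil number provably meets integers $m$ with $f(m)<g(m)$ (for example $9$ on the path $16\to12\to10\to9\to5\to3$), so I cannot close the induction by asserting $f=g$ at the intermediate vertices. I would attack it by strengthening the induction hypothesis to a lower bound for $f(2^{a}j)$ valid for \emph{all} odd $j$ in the range swept out by the descent (so that the specialization $j=2k-1$ returns $\tfrac{a(a+1)}{2}+k-1$), and then inducting on $a$. As an alternative I would pass to bracket form: by Lemma \ref{8} any path between two classes of the same dimension has length equal to the difference of their $S$-values, so I would assemble a descent that alternates moves inside a fixed dimension with single dimension-dropping edges whose lengths sum to $\tfrac{a(a+1)}{2}+k$, using the reachability test of Lemma \ref{exist_path} to guarantee the intermediate classes exist and Lemma \ref{inequality_Delta} to control the accumulated correction. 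In either route the delicate point---and the reason a one-line induction fails---is the irregular dependence of $2^{a}(2k-1)$ on the binary block structure and parity of $k$, which is what prevents a uniform step and forces a short case analysis.
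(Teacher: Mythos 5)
Your upper bound is fine and matches what the paper uses implicitly: the proper form of $n=2^{a+1}k$ is $k\cdot 2^{a+1}+0$ with $p=a+1$, $m=k$, so Theorem \ref{3} gives $g(n)=\frac{a(a+1)}{2}+k$ and hence $f(n)\le \frac{a(a+1)}{2}+k$. The problem is that the matching lower bound is never actually proved. Both of your proposed routes (a strengthened induction on $a$ over all odd $j$ in the swept range, or an assembled descent in bracket form controlled by Lemmas \ref{8}, \ref{exist_path}, \ref{inequality_Delta}) are announced rather than executed, and as you yourself note, the naive induction along the canonical path cannot be closed because intermediate vertices need not satisfy $f=g$. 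So the proposal has a genuine gap exactly where you locate ``the heart of the proof.''

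The idea you are missing is that no path construction or induction is needed at all: the lower bound comes from the \emph{second} half of Theorem \ref{3}, the frequency table of $g$. Since $g$ is non-decreasing and takes every non-negative integer value, the smallest integer $N$ with $g(N)=l$ is $N=\sum_{i=0}^{l-1}F(i)$, where $F(s)=\#\{q\mid g(q)=s\}$; and at this jump point one automatically has $f(N)=l$, because $g(N)=\max\bigl(g(N-1),f(N)\bigr)=\max(l-1,f(N))=l$ forces $f(N)=l$. The frequency table (the value $2^{a+1}$ occurs $a+2$ times, for $s$ in $[\frac{a(a+3)}{2},\frac{(a+1)(a+4)}{2})$) gives $\sum_{i=0}^{l-1}F(i)=2^{a+1}\bigl(l-\frac{a(a+1)}{2}\bigr)$ for $l$ in that range, and this equals $2^{a+1}k$ precisely when $l=k+\frac{a(a+1)}{2}$ with $k\in[a,2a+1]$. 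Thus $n=2^{a+1}k$ is the first integer at which $g$ attains the value $k+\frac{a(a+1)}{2}$, whence $f(n)=g(n)=k+\frac{a(a+1)}{2}$. Your computation of $g(n)$ already contains all the arithmetic required; what is absent is the observation that $n$ is a jump point of $g$, which converts the upper bound into an equality.
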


\begin{proof}\hfill
\begin{itemize}
    \item We first show that the minimum number $n$ such that $f(n)=l$ is $n=2^{a+1}\Bigl(l - \frac{a(a+1)}{2}\Bigr)$, where $\frac{a(a+3)}{2} \leq l < \frac{(a+1)(a+4)}{2}$. Indeed, the minimum number $n$ such that $f(n)=l$ is $\sum\limits_{i=0}^{l-1} F(i)$, where $F(s) = \#\{n \mid g(n)=s\}$. By Theorem \ref{3}, in the frequency table of $g(n)$, the value $2^a$ appears $a+1$ times and $F(0)=2^1$. Hence, $F(s)=2^{a+1}$ if and only if 
    $$
    2 + 3 + \dots + (a+1) \leq s < 2 + 3 + \dots + (a+2),
    $$
    or equivalently,
    $$
    \frac{a(a+3)}{2} \leq s < \frac{(a+1)(a+4)}{2}.
    $$
    
    Now, for $s \in \Bigl[\frac{a(a+3)}{2}, \frac{(a+1)(a+4)}{2}\Bigr)$ we have
    $$
    \sum_{i=0}^{s-1} F(i) = \sum_{j=1}^a 2^j (j+1) + 2^{a+1} \Bigl(s - \frac{a(a+3)}{2}\Bigr) = 2^{a+1} \Bigl(s - \frac{a(a+1)}{2}\Bigr).
    $$
    Hence $n = 2^{a+1}\Bigl(l - \frac{a(a+1)}{2}\Bigr)$, where $\frac{a(a+3)}{2} \leq l < \frac{(a+1)(a+4)}{2}$.
    
    \item We have $n = 2^{a+1} k = 2^{a+1} \Bigl(l - \frac{a(a+1)}{2}\Bigr)$, where $l = k + \frac{a(a+1)}{2} \in \Bigl[\frac{a(a+3)}{2}, \frac{(a+1)(a+4)}{2}\Bigr)$ since $k \in [a, 2a+1]$. Therefore $f(n) = l = k + \frac{a(a+1)}{2}$.
\end{itemize}
\end{proof}

For a real number $r$, $z \approx r$ means that $z$ is the largest integer such that $z \leq r$, and $z \overset{+}{\approx} r$ means that $z$ is the largest integer such that $z < r$. The following is a corollary of Lemma \ref{for_Vakil}.

\begin{coro}\label{f_n_leq_log2i}
For $i \geq 2$ and $n < \log_2 i$, we have
$$
f([n,0_1,\dots,0_{i-1}]) = (2^n-1)2^k + \frac{(i-k)(i-k-1)}{2},
$$
where $k \geq 1$ is such that $(2^n-1)2^{k-1} + k + 1 \approx i$, or equivalently, $i \approx (2^n-1)2^k + k + 1$.
\end{coro}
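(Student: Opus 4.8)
The plan is to recognize the bracket class $[n,0_1,\dots,0_{i-1}]$ as a Vakil number and to read off $f$ directly from Lemma \ref{length_Vakil}. In binary the minimal representative of this class is
$$N=\underbrace{1\cdots1}_{n}\underbrace{0\cdots0}_{i}{}_{(2)}=(2^n-1)2^{\,i},$$
so $f([n,0_1,\dots,0_{i-1}])=f(N)$. The one genuinely non-formal observation is that, once $i>2^n$, the number $N$ is \emph{not} Vakil through its odd normal form $2^{i}\cdot(2^n-1)$ (the constraint $a\le k\le 2a+1$ fails there), but it does admit a Vakil presentation with an even second coordinate. Concretely I set $a:=i-k-1$ and $K:=(2^n-1)2^{k}$, so that $N=2^{a+1}K$, and I claim that for the value of $k$ singled out in the statement the pair $(a,K)$ satisfies the Vakil inequality $a\le K\le 2a+1$; then $N$ is a Vakil number with Vakil pair $(a,K)$.

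To carry this out I first record that the defining relation $(2^n-1)2^{k-1}+k+1\approx i$ means precisely
$$(2^n-1)2^{k-1}+k+1\ \le\ i\ <\ (2^n-1)2^{k}+(k+1)+1 .$$
I then check the two Vakil inequalities separately. For the upper bound, since $k\ge1$ the quantity $K=(2^n-1)2^{k}$ is even while $2a+1=2(i-k-1)+1$ is odd, so $K\le 2a+1$ is equivalent to $K\le 2a$, i.e. to $(2^n-1)2^{k-1}\le i-k-1$, i.e. to $(2^n-1)2^{k-1}+k+1\le i$; this is exactly the left half of the displayed condition. For the lower bound, $a\le K$ reads $i-k-1\le (2^n-1)2^{k}$, i.e. $i\le (2^n-1)2^{k}+k+1$, which is exactly the right half (the form $i\approx(2^n-1)2^{k}+k+1$). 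Hence the Vakil inequality and the defining condition for $k$ are literally the same statement. Two boundary checks remain. First, $k\ge1$ is selectable only when $i\ge (2^n-1)2^{0}+1+1=2^n+1$, that is $i>2^n$, which is exactly the hypothesis $n<\log_2 i$; this is where that hypothesis is used. Second, from $i\ge (2^n-1)2^{k-1}+k+1\ge k+2$ we get $a=i-k-1\ge1\ge0$, so the presentation $N=2^{a+1}K$ is legitimate.

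Finally, applying Lemma \ref{length_Vakil} to $N=2^{a+1}K$ with Vakil pair $(a,K)$ yields
$$f(N)=K+\frac{a(a+1)}{2}=(2^n-1)2^{k}+\frac{(i-k)(i-k-1)}{2},$$
using $a=i-k-1$ and $a+1=i-k$. Since $f([n,0_1,\dots,0_{i-1}])=f(N)$, this is the claimed formula. I expect no substantive obstacle: the whole argument reduces to spotting the even Vakil presentation and then verifying an equivalence of two inequality systems, the only delicate point being the parity step that collapses $K\le 2a+1$ to $K\le 2a$. The result is therefore a direct corollary of Lemma \ref{length_Vakil}, exactly as advertised.
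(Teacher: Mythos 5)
Your proposal is correct and follows essentially the same route as the paper: both identify $N=(2^n-1)2^i$ as a Vakil number with $a=i-k-1$ (your inequalities $a\le K\le 2a+1$ for $K=(2^n-1)2^k$ are exactly the paper's $2^{a+1}a\le N<2^{a+2}(a+1)$, and your parity step matches the paper's strict inequality in (\ref{b})) and then apply Lemma \ref{for_Vakil}. No gaps.
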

\begin{proof}
For $n < \log_2 i$, $k \geq 1$ is well-defined because $2^n < i$ implies $(2^n-1)2^0 + 2 \leq i$.  

Set $a = i - k - 1$. We will prove that
$$
2^{a+1} a \leq 2^i (2^n-1) < 2^{a+2} (a+1).
$$
Indeed,
\begin{equation}\label{a}
2^{a+1} a \leq 2^i (2^n-1) \quad \Leftrightarrow \quad i \leq (2^n-1)2^k + k + 1,
\end{equation}
\begin{equation}\label{b}
2^i (2^n-1) < 2^{a+2} (a+1) \quad \Leftrightarrow \quad (2^n-1)2^{k-1} + k < i.
\end{equation}
Both (\ref{a}) and (\ref{b}) follow from our assumption that $(2^n-1)2^{k-1} + k + 1 \approx i$ (equivalently, $i \approx (2^n-1)2^k + k + 1$).  

By Lemma \ref{for_Vakil}, we have
$$
f\bigl(2^i (2^n-1)\bigr) = \frac{2^i (2^n-1)}{2^{a+1}} + \frac{a(a+1)}{2} = (2^n-1)2^k + \frac{(i-k)(i-k-1)}{2}.
$$
Since $\overline{2^i (2^n-1)} = [n,0_1,\dots,0_{i-1}]$, the conclusion follows.
\end{proof}

\begin{ex}
In Example \ref{6}, we see that $[3,2,0] \leadsto [2,0,0] \leadsto [1,0,1]$; hence, $[3,2,0]$ is closer to $[2,0,0]$ than to $[1,0,1]$. 
Lemmas \ref{8}, \ref{exist_path}, and \ref{inequality_Delta} can be seen through the path $[3,2,0] \leadsto [1,0,1]$. Indeed,
\begin{itemize}
    \item the path from $[3,2,0]$ to $[1,0,1]$ has length $9$,
    \item we have the inequalities $3>1$, $3+2>1+0$, and $3+2+0>1+0+1$,
    \item we have 
    $$\Delta^{[3,2,0]}=l([3,2,0])-S([3,2,0])= 13 - 13 = 0,$$ 
    $$\Delta^{[1,0,1]}=l([1,0,1])-S([1,0,1])= 4 - 4 = 0.$$
\end{itemize}
Lemma \ref{length_Vakil} and Corollary \ref{f_n_leq_log2i} can be seen through $[1,0,0]$. Indeed,
\begin{itemize}
    \item $l([1,0,0])=3$,
    \item the pair $(a,k)$ in Lemma \ref{length_Vakil} is $(1,2)$,
    \item the numbers $i,n,k$ in Corollary \ref{f_n_leq_log2i} are $3,1,1$, respectively.
\end{itemize}
\end{ex}

\subsection{Canonical path to Vakil numbers}

\begin{defi}
We call a non-negative integer $N$ a \brown{Vakil number} if $\overline{N}$ has the form $2^{a+1}k$ $(a \leq k \leq 2a+1$, $k \in \mathbb{Z}_{>0})$, and a \brown{$d$-Vakil number} if its dimension is $d$. We call $(a,k)$ the \brown{Vakil pair associated with $N$}, and write it as $V(N)$. A pair $(a,k)$ is a \brown{Vakil pair} if and only if $k \in [a,2a+1]$.
\end{defi}

Suppose that 
\[
\overline{N}=\underbrace{1*\dots *}_{\alpha}\,\underbrace{*\dots*10\dots0}_{\beta}\,_{(2)},
\]
where \(2^{l(\beta)}(l(\beta)-1)\leq \overline{N} < 2^{l(\beta)+1}l(\beta)\).
Then the number \(N\) is a Vakil number if and only if \(\beta = 0\dots 0\), and it is not a Vakil number if and only if \(\beta = *\dots*10\dots0\).
We already know the formula for \(f(N)\) when \(N\) is a Vakil number by Lemma~\ref{for_Vakil}.
We will prove that, after finitely many steps along the canonical path, a number \(N\) which is not a Vakil number will meet the first Vakil number \(N'\) of the same dimension.
Hence, by Remark~\ref{f_by_cano} and Lemma~\ref{8}, we have
\[
\Delta^N= \Delta^{\overline{N}}=\Delta^{N'}.
\]
Here, we can compute \(\Delta^{N'}\) explicitly by Lemma~\ref{for_Vakil}.

\begin{theo}\label{18} If $N$ is not a Vakil number, then some first steps in the canonical path of $T_{\overline{N}}$ are 
\begin{align*}
\overline{N}=&N_0=1*\dots*\underbrace{*\dots*1\overbrace{000\dots00}^{b \text{ times} }}_{\text{length }l(\beta)}\,_{(2)}\\
\longrightarrow \quad& N_1= 1*\dots*\underbrace{*\dots*0100\dots00}_{\text{length } l(\beta)}\,_{(2)}\\
\longrightarrow \quad& N_2=1*\dots*\underbrace{*\dots*0010\dots00}_{\text{length } l(\beta)}\,_{(2)}\\
\longrightarrow \quad& N_3=1*\dots*\underbrace{*\dots*0001\dots00}_{\text{length } l(\beta)}\,_{(2)}\\
 \quad& \dots\\
\longrightarrow \quad& N_{b-1}= 1*\dots*\underbrace{*\dots*0000\dots10}_{\text{length } l(\beta)}\,_{(2)}\\
\longrightarrow \quad& N_b= 1*\dots*\underbrace{*\dots*0000\dots 0\cancel{1}}_{\text{length } l(\beta)}\,_{(2)}
\end{align*}
with $l(\beta(N_k))=l(\beta)$ for $0\leq k \leq b-1$ and $l(\beta)-1 \leq l(\beta(N_b))\leq l(\beta)$.
\end{theo}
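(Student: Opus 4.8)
The plan is to walk down the canonical path one edge at a time, tracking the three–part decomposition $\alpha\beta\gamma$ of each $N_k$, and to prove by induction on $k$ that the edge $N_k \to N_{k+1}$ is exactly the canonical edge while the block length $l(\beta)$ is preserved. First I would record the binary effect of a single subtraction $n \mapsto n-2^s$ established in the proof of Lemma \ref{f(n)=f(m)}: when bit $s$ is $0$, subtracting $2^s$ flips all bits from position $s$ up to and including the first $1$ to its left. Since $\overline{N}$ is the minimum of its class its tail $\gamma$ is empty and it ends in $b\geq 1$ zeros; because $N$ is not a Vakil number, $\beta\neq 0\cdots 0$, so $\beta$ has the form $*\cdots*1\underbrace{0\cdots0}_b$ with its rightmost $1$ at position $b$. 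By the definition of the canonical index, $s(N_0)$ is the position of the leftmost zero of the rightmost block of zeros in $\beta$, i.e.\ position $b-1$, and subtracting $2^{b-1}$ flips only positions $b-1$ and $b$, producing $N_1$ as stated. Iterating, at $N_k$ the rightmost $1$ of $\beta$ sits at position $b-k$ with $b-k$ trailing zeros below it, so $s(N_k)=b-k-1$ and the canonical edge shifts this $1$ one place to the right, giving $N_{k+1}$.

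For this induction to be valid I must show that the decomposition of each $N_k$ with $0\leq k\leq b-1$ still has block length $l(\beta)$, so that the canonical index is computed inside the same $\beta$–window and $\alpha$ is untouched. Here I would invoke the value–interval characterization of $l(\beta)$ coming from the proper form of Lemma \ref{2}: one has $l(\beta(n))=p$ precisely when $2^{p}(p-1)\leq n<2^{p+1}p$, and these intervals partition the non‑negative integers, so $l(\beta)$ is genuinely determined by the value. All moves occur within the low $l(\beta)$ bits, so it suffices that every $N_k$ ($k\leq b-1$) lie in $[\,2^{l(\beta)}(l(\beta)-1),\,2^{l(\beta)+1}l(\beta)\,)$. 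The upper bound is immediate from $N_k\leq N_0$. For the lower bound, the total subtracted from $N_0$ to reach $N_{b-1}$ is $2^{b-1}+\cdots+2^{1}=2^{b}-2$, while the rightmost $1$ of $\beta$ contributes at least $2^b$ to the value; hence $N_{b-1}\geq 2^{l(\beta)}(l(\beta)-1)+2>2^{l(\beta)}(l(\beta)-1)$, and every intermediate $N_k$ stays in the interval. This simultaneously legitimizes the canonical‑index computation above and yields $l(\beta(N_k))=l(\beta)$ for $0\leq k\leq b-1$.

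The main work is the final step. At $N_{b-1}$ the moving $1$ is at position $1$, so $s(N_{b-1})=0$; subtracting $2^{0}$ carries it to position $0$, creating a trailing $1$ that is deleted when we pass to the binary class, so that $N_b=(N_0-2^{b})/2$. To bound $l(\beta(N_b))$ I would write $N_0=\alpha 2^{l(\beta)}+p\,2^{b+1}+2^{b}$, where $p$ is the integer value of the prefix of $\beta$ above the moving $1$, giving $N_b=\alpha 2^{l(\beta)-1}+p\,2^{b}$. Using $\alpha\in[l(\beta)-1,2l(\beta)-1]$ and $p\,2^{b}<2^{l(\beta)-1}$ then yields $N_b\in[\,(l(\beta)-1)2^{l(\beta)-1},\,l(\beta)2^{l(\beta)}\,)$. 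This range straddles exactly one interval boundary, namely $2^{l(\beta)}(l(\beta)-1)$: values below it fall in the interval of $l(\beta)-1$, values at or above it in the interval of $l(\beta)$, so $l(\beta(N_b))\in\{\,l(\beta)-1,\,l(\beta)\,\}$, the remaining claim. I expect this final range computation, together with the careful use of the fact that the proper‑form intervals partition the integers so that $l(\beta)$ is well defined, to be the most delicate point; once it is in place, the intermediate steps follow routinely from the binary description of subtraction.
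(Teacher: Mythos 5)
Your proposal is correct and follows essentially the same route as the paper: identify the canonical index as $b-1$ decreasing by one at each step, use the interval characterization $2^{l(\beta)}(l(\beta)-1)\leq n<2^{l(\beta)+1}l(\beta)$ (equivalently $l(\beta)-1\leq \alpha<2l(\beta)$) to show each edge preserves $l(\beta)$, and then bound $N_b=N_0/2-2^{b-1}$ to get $l(\beta(N_b))\in\{l(\beta)-1,l(\beta)\}$. The only cosmetic differences are that you verify the interval membership for all intermediate $N_k$ at once via $N_{b-1}\leq N_k\leq N_0$ rather than edge by edge, and you bound $N_b$ through the explicit decomposition $\alpha 2^{l(\beta)-1}+p2^b$ instead of halving the bounds on $N_0$ directly.
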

\begin{proof}
Let $\alpha_k,\beta_k$ be the $\alpha,\beta$ parts of $N_k$, and let $s_k$ be $s(N_k)$. We have $N_k=\alpha_k\beta_k$. 
\begin{itemize}
\item[1.]
First, we prove that the edge
\[
\underbrace{1*\dots*}_\alpha\underbrace{*\dots*1\overbrace{000\dots00}^{b\geq 2}}_\beta\,_{(2)} 
\longrightarrow \underbrace{1*\dots*}_{\alpha'=\alpha}\underbrace{*\dots*0100\dots00}_{\beta'}\,_{(2)}
\]
is canonical with canonical indices $s=b-1$ and $s'=s-1$. Indeed, $s=b-1 \geq 1$ by definition. We have $l(\beta')=l(\beta)$. Indeed, since $2^s\leq \beta<2^{l(\beta)}$, we have
\[
\frac{\alpha'\beta'}{2^{l(\beta)}}= \frac{\alpha\beta-2^s}{2^{l(\beta)}}=\alpha + \frac{\beta-2^s}{2^{l(\beta)}}=\alpha +\epsilon
\]
for some $\epsilon \in [0,1)$. Since $l(\beta)-1\leq \alpha < 2l(\beta)$, we have
\[
l(\beta)-1 \leq \frac{\alpha'\beta'}{2^{l(\beta)}}<2l(\beta).
\]
This implies that $l(\beta')=l(\beta)$. Since $\beta'=*\dots*010\dots0$, we have $s'=b-2=s-1$.
\item[2.]
Now, applying 1. to $N_0=\alpha_0\beta_0$, we obtain the canonical edge $N_0\rightarrow N_1$. Similarly, for $N_1$, and so on, we obtain the first steps in the canonical path
\[
N_0\rightarrow N_1\rightarrow \dots \rightarrow N_{b-1}.
\]
Because $s_{b-1}=0$, we obtain the edge
\[
N_{b-1}\rightarrow N_b = \frac{N_0}{2}-2^{b-1}.
\]
\item[3.]
To prove that $l(\beta)-1 \leq l(\beta(N_b))\leq l(\beta)$, it suffices to show that
\[
2^{l(\beta)-1}(l(\beta)-2) \leq \frac{N_0}{2}-2^{b-1} < 2^{l(\beta)+1}l(\beta).
\]
This holds because
\[
2^{l(\beta)}(l(\beta)-1)\leq N_0 < 2^{l(\beta)+1}l(\beta).
\]
\end{itemize}
\end{proof}

We see that $l(\beta_k)=l(\beta)$ for all $0\leq k<b$. Hence, $N_k$ cannot be a Vakil number because $N_k/2^{l(\beta)} \notin \mathbb{Z}$. However, $N_b$ can be a Vakil number in some cases. Moreover, if $N_0$ and $N_c$ ($c\geq b$) have the same dimension and $N_0$ reaches $N_c$ through a canonical path, then $\Delta^{N_0}=\Delta^{N_c}$ by Remark~\ref{f_by_cano} and Lemma~\ref{8}. In bracket form, if
\[
N_0 =[\alpha_k,\dots, \alpha_j, 0_1,\dots,0_{j-1}] \qquad (\alpha_j,j\geq 1),
\]
then
\[
N_b=[\alpha_k,\dots,\alpha_j-1,0_1,\dots,0_{j-1}].
\]
Thus, we can compute $\Delta^N$ by the \brown{reduction process}:
\begin{enumerate}\label{reduction}
\item[1.] Write $\overline{N}=[\alpha_k,\dots,\alpha_1]$.
\item[2.] Reduce each $\alpha_i\geq 1$ in the following way, from top to bottom and left to right. If $\alpha_i=0$, we do not perform the reduction $\alpha_i \rightarrow \alpha_i-1$. Here we use $\xrightarrow[]{(i)}$ for the $i$-th reduction step.
\begin{align*}
\alpha_1\xrightarrow[\quad\quad\quad]{(1)} \alpha_1-1\xrightarrow[\quad\quad\quad]{(2)} \dots\xrightarrow[\quad\quad\quad]{(\alpha_1)} 0\\
\alpha_2\xrightarrow[\quad\quad\quad]{(\alpha_1+1)} \alpha_2-1\xrightarrow[\quad\quad\quad]{(\alpha_1+2)}\dots\xrightarrow[\quad\quad\quad]{(\alpha_1+\alpha_2)} 0\\
\dots\\
\alpha_k\xrightarrow[\quad\quad\quad]{} \alpha_k-1\xrightarrow[\quad\quad\quad]{}\dots\xrightarrow[\quad\quad\quad]{} 0
\end{align*}
\item[3.] The process finishes when we reach the first Vakil number $N'$ of the same dimension. The existence of $N'$ is guaranteed by Lemma~\ref{21} below.
\item[4.] Then we have $\Delta^N = \Delta^{\overline{N}} = \Delta^{N'}$.
\end{enumerate}
The number $N'$ above is the $k$-Vakil number that is closest to $\overline{N}$.

\begin{lemm}\label{21}
Let $x=\lceil \log_2 d \rceil$. 
The number $2^d = [1,0_1,\dots,0_{d-1}]$ is a $d$-Vakil number with Vakil pair
\[
V(2^d) = \begin{cases}
(d-x,2^{x-1}) & \text{if } 2^{x-1}+x\geq d,\\
(d-x-1,2^x) & \text{if } 2^{x-1}+x<d.
\end{cases}
\]
\end{lemm}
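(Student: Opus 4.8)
The plan is to reduce the statement to finding the proper form (Lemma \ref{2}) of the integer $2^d$ and then to identify, by an elementary estimate, which power of two plays the role of $m$. First I would record that $2^d$ has binary expansion $1\underbrace{0\cdots0}_d$, so it equals its own class minimum $\overline{2^d}$ and has dimension $d$ (it contains exactly $d$ zeros); thus $2^d$ is automatically a $d$-dimensional number, and only the Vakil pair remains to be computed. By definition a Vakil pair $(a,k)$ for $2^d$ must satisfy $2^{a+1}k=2^d$ together with $a\le k\le 2a+1$. Since $2^d$ is a power of two, writing $k=2^q$ forces $q=d-a-1\ge 0$, and the Vakil-pair conditions become
$$
d-q-1\;\le\;2^q\;\le\;2(d-q)-1 .
$$
By Lemma \ref{2} the proper form of $2^d$ (equivalently, the integer $q$ above) is unique, so it suffices to exhibit the correct $q$ in each case.

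Next I would introduce $x=\lceil\log_2 d\rceil$, i.e.\ $2^{x-1}<d\le 2^x$, and test the two natural candidates $q=x-1$ and $q=x$, which correspond exactly to the two asserted pairs $(a,k)=(d-x,2^{x-1})$ and $(a,k)=(d-x-1,2^x)$. In both cases $2^{a+1}k=2^d$ is immediate. For $q=x-1$ the lower bound $d-x\le 2^{x-1}$ is literally the hypothesis $2^{x-1}+x\ge d$ of the first case, while the upper bound $2^{x-1}\le 2d-2x+1$ holds unconditionally, because $d\ge 2^{x-1}+1\ge 2x-1$ (this last inequality reduces to $2^{x-1}\ge 2(x-1)$). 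Symmetrically, for $q=x$ the lower bound $d-x-1\le 2^x$ is automatic from $d\le 2^x$, whereas the upper bound $2^x\le 2d-2x-1$ rearranges to $d\ge 2^{x-1}+x+1$, i.e.\ to the strict hypothesis $2^{x-1}+x<d$ of the second case.

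Finally, since the two hypotheses $2^{x-1}+x\ge d$ and $2^{x-1}+x<d$ partition all admissible $d$, exactly one of the candidates satisfies the Vakil-pair inequalities; by the uniqueness coming from Lemma \ref{2} this candidate is $V(2^d)$, which gives the stated formula. I expect the only delicate points to be purely arithmetic: keeping track of strict versus weak inequalities at the boundary $d=2^{x-1}+x$ (which is precisely where the case split is decided), verifying the auxiliary bound $2^{x-1}\ge 2(x-1)$ that makes the non-binding inequality free, and checking the small values ($d=1$, where $x=0$) separately. None of these presents a real obstacle once the reduction to the single chain $d-q-1\le 2^q\le 2(d-q)-1$ is in place.
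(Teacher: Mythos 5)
Your proposal is correct and follows essentially the same route as the paper: both arguments check the two candidate pairs $(d-x,2^{x-1})$ and $(d-x-1,2^x)$ against the Vakil-pair inequalities $a\le k\le 2a+1$, observing that in each case one inequality holds unconditionally from $2^{x-1}<d\le 2^x$ while the other is exactly the case hypothesis $2^{x-1}+x\gtrless d$. Your explicit appeal to the uniqueness of the proper form (Lemma \ref{2}) and the auxiliary bound $2^{x-1}\ge 2(x-1)$ only make explicit what the paper leaves implicit.
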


\begin{proof}
Since $x=\lceil \log_2 d \rceil$, we have $d-x-1\leq 2^x$ and $2^{x-1}\leq 2(d-x)+1$. If $2^{x-1}+x\geq d$, then $2^{x-1}\geq d-x$, and hence $V(2^d)=(d-x,2^{x-1})$. If $2^{x-1}+x<d$, then $2^x\leq 2(d-x-1)+1$, and hence $V(2^d)=(d-x-1,2^x)$.
\end{proof}

\begin{rema}
The reduction process counts canonical edges from $\overline{N}$ to the first Vakil number $N'$, but not the whole canonical path of $T_{\overline{N}}$. The reason is that Theorem~\ref{18} does not apply to Vakil numbers. Hence, if $N''$ is the second Vakil number we meet in the reduction process, we have $\Delta^N \geq \Delta^{N''}$, but this does not guarantee equality. For example, as we will see later in Table~\ref{tab_53}, there are some Vakil numbers that can reach another Vakil number (as we know by Lemma~\ref{exist_path}), but the value of $\Delta$ decreases strictly. On the other hand, as we will see in Example~\ref{ex_reduction} below or in Theorem~\ref{23} later, equality still often appears.     
\end{rema}

\begin{ex}\label{ex_reduction}
In Examples~\ref{5} and~\ref{6}, the number $473$ is not a Vakil number, because $\overline{473}=236$ does not have the form $2^{a+1}k$ $(a\leq k \leq 2a+1,\, k \in \mathbb{Z}_{>0})$. Its binary form with $\alpha,\beta$ parts is $\underbrace{111}_{\alpha} \underbrace{01100}_{\beta}\,_{(2)}$. We see that $\beta \ne 0\dots 0$. The first steps mentioned in Theorem~\ref{18} in the canonical path of $T_{\overline{236}}$ are (in integer, binary, and bracket form):

\begin{align*}
&N_0 = 236=111\underbrace{011\overbrace{00}^{2 \text{ times} }}_{\text{length }5}\,_{(2)} = [3,2,0]\\
\longrightarrow \quad& N_1 = 234 =111\underbrace{01010}_{\text{length } 5}\,_{(2)} = [3,1,1]\\
\longrightarrow \quad& N_2=116=111\underbrace{0100\cancel{1}}_{\text{length } 5}\,_{(2)} = [3,1,0].
\end{align*}
We have $\beta(N_0) = 01100$, $\beta(N_1) = 01010$, and $\beta(N_2) = 0100$. The reduction process to compute $\Delta^{473}$ is:
\begin{itemize}
    \item[1.] Write $236 = [3,2,0]$.
    \item[2.] Make reductions on the bracket $[3,2,0]$:
    \begin{align*}
    \alpha_1&=0,\\
    \alpha_2&=2 \xrightarrow[]{(1)} 1 \xrightarrow[]{(2)} 0,\\
    \alpha_3&=3 \xrightarrow[]{(3)} 2 \xrightarrow[]{(4)} 1 \xrightarrow[]{(5)} 0 .
    \end{align*}
    Reading the steps from top to bottom and left to right, we obtain the sequence of brackets
    \[
    [3,2,0] \xrightarrow[]{(1)} [3,1,0] \xrightarrow[]{(2)} [3,0,0] \xrightarrow[]{(3)} [2,0,0] \xrightarrow[]{(4)} [1,0,0] \xrightarrow[]{(5)} [0,0,0] = [0].
    \]
    We see that $N_0 \xrightarrow[]{(1)} N_2$ is $[3,2,0] \xrightarrow{(1)} [3,1,0]$. 
    \item[3.] Now, $[1,0,0]$ is a $3$-Vakil number with Vakil pair $(1,2)$ by Lemma~\ref{21}. This guarantees that $[3,2,0]$ can reach a first Vakil number of dimension $3$. In this case, the first Vakil number is $[2,0,0]=24$ with Vakil pair $(2,3)$.
    \item[4.] By Remark~\ref{f_by_cano} and Lemma~\ref{8}, we have $\Delta^{473}=\Delta^{236}=\Delta^{24}$. By Lemma~\ref{for_Vakil}, we have
    \[
    \Delta^{24} = f(24) - S([2,0,0]) = 6-6=0.
    \]
\end{itemize}
The second Vakil number we meet in the reduction process is $[1,0,0]=8$. By Lemma~\ref{for_Vakil}, we have
\[
\Delta^{8} = f(8)-S([1,0,0])=3-3=0.
\]
\end{ex}

\subsection{Values of $\Delta$ for $k$-Vakil numbers}

\begin{lemm}\label{19}
Set $S'([\alpha_k, \dots,\alpha_1])=\sum\limits_{j=1}^k \alpha_j$. If $N=[\alpha_k,\dots,\alpha_2,\alpha_1]\alpha_0$, then
\begin{enumerate}
\item[a)] $2^{a+1}N = [\alpha_k,\dots,\alpha_0,0_1,\dots,0_a]$ and
\[
S(2^{a+1}N)= S([\alpha_k,\dots,\alpha_0]) + S'([\alpha_k,\dots,\alpha_0])\,a,
\]
\item[b)] $N+1=\begin{cases}
[\alpha_k,\dots,\alpha_1+1,0_1,\dots,0_{\alpha_0-1}] & \text{if } \alpha_0\geq 1, \\[2pt]
[\alpha_k,\dots,\alpha_2](\alpha_1+1) & \text{if } \alpha_0=0.
\end{cases}$
\end{enumerate}
\end{lemm}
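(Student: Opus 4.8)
The plan is to reduce both statements to direct manipulations of the binary string of $N$ via the bracket--binary dictionary that is implicit in the definition of bracket form: a bracket entry $\gamma$ corresponds to the block $\underbrace{1\cdots1}_{\gamma}0$ (the one-block together with its single ``mandatory'' trailing zero), an explicit zero-entry corresponds to a single $0$, and the tail $\alpha_0$ is the run $\underbrace{1\cdots1}_{\alpha_0}$ appended at the very end. Writing $1^{m}$ for a block of $m$ ones, this means $N=[\alpha_k,\dots,\alpha_1]\alpha_0$ is exactly the binary string $1^{\alpha_k}0\,1^{\alpha_{k-1}}0\cdots 1^{\alpha_1}0\,1^{\alpha_0}$. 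I would first record this dictionary explicitly as a preliminary remark, since everything afterward is bookkeeping on top of it.

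For part a), multiplying by $2^{a+1}$ merely appends $a+1$ zeros, yielding $1^{\alpha_k}0\cdots 1^{\alpha_1}0\,1^{\alpha_0}0^{a+1}$. The one thing to observe is that the trailing block $1^{\alpha_0}0^{a+1}=1^{\alpha_0}0\cdot 0^{a}$ regroups as a single bracket entry $\alpha_0$ followed by $a$ zero-entries; since the string now ends in $0$ the tail is empty, giving the bracket $[\alpha_k,\dots,\alpha_0,0_1,\dots,0_a]$. For the $S$-formula I would read off positions from the right: the $a$ trailing zeros contribute nothing, and the entry $\alpha_j$ now sits at position $a+1+j$, so that $S(2^{a+1}N)=\sum_{j=0}^k (a+1+j)\alpha_j$; splitting $a+1+j=(j+1)+a$ then gives $S([\alpha_k,\dots,\alpha_0])+a\,S'([\alpha_k,\dots,\alpha_0])$.

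For part b), I would split on whether $N$ ends in a zero. If $\alpha_0=0$ the last bit is $0$, so $N+1$ simply flips it, turning the final block $1^{\alpha_1}0$ into $1^{\alpha_1+1}$; this trailing run of ones is precisely the tail, giving $[\alpha_k,\dots,\alpha_2](\alpha_1+1)$. If $\alpha_0\geq 1$, adding $1$ propagates a carry through the tail $1^{\alpha_0}$ and halts at the preceding $0$, so the segment $0\,1^{\alpha_0}$ becomes $1\,0^{\alpha_0}$ and the string becomes $1^{\alpha_k}0\cdots 1^{\alpha_1+1}0^{\alpha_0}$; regrouping $1^{\alpha_1+1}0^{\alpha_0}=1^{\alpha_1+1}0\cdot 0^{\alpha_0-1}$ produces the entry $\alpha_1+1$ followed by $\alpha_0-1$ zero-entries, i.e. $[\alpha_k,\dots,\alpha_1+1,0_1,\dots,0_{\alpha_0-1}]$ with empty tail.

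Since each step is an unambiguous rewriting of a binary string, there is no genuine obstacle; the only places demanding care are the re-grouping of runs of ones and zeros back into bracket entries, where the single mandatory zero carried by each one-block must be accounted for correctly, and the position bookkeeping in the $S$-computation, where the uniform shift by $a$ interacts with the newly created entry $\alpha_0$ at position $a+1$. Stating the dictionary up front is what makes these regroupings routine rather than error-prone.
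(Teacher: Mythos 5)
Your argument is correct and is precisely the ``simple computation'' that the paper's proof leaves implicit: the bracket--binary dictionary you state up front is just the paper's own definition of bracket form, and parts a) and b) then follow by the string manipulations you describe (appending $a+1$ zeros and regrouping, resp.\ propagating the carry through the tail $1^{\alpha_0}$). The only difference is that you spell out the regrouping and the position bookkeeping for $S$ that the paper omits.
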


\begin{proof}
This follows by a simple computation.
\end{proof}

\begin{prop}\label{20}
The numbers $2^{a+1}N$ and $2^{a'+1}(N+1)$ have the same positive dimension if and only if $a'=a-t_N+1_N$, where $1_N$ is equal to $1$ if $N+1$ is not a power of $2$, and it is equal to $0$ otherwise.
\end{prop}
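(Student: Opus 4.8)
The plan is to convert the statement into a bookkeeping identity about dimensions and then to evaluate that identity using Lemma~\ref{19}. Throughout I may assume $N\geq 1$, since for $N=0$ the number $2^{a+1}N=0$ has dimension $0$ and the hypothesis of a common positive dimension is vacuous.

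First I would read off the dimension of a number of the form $2^{b+1}M$. Writing $M=[\beta_l,\dots,\beta_1]\beta_0$, part (a) of Lemma~\ref{19} gives $2^{b+1}M=[\beta_l,\dots,\beta_0,0_1,\dots,0_b]$; counting bracket entries, this has $l+1+b$ of them with nonzero leading entry, so
\[
\dim\bigl(2^{b+1}M\bigr)=\dim\overline{M}+1+b,
\]
and in particular this dimension is always positive. Applying this once with $(M,b)=(N,a)$ and once with $(M,b)=(N+1,a')$, the equality of the two dimensions becomes $\dim\overline{N}+1+a=\dim\overline{N+1}+1+a'$, i.e.
\[
a'=a+\bigl(\dim\overline{N}-\dim\overline{N+1}\bigr).
\]
Since the dimension formula is an exact equality, this chain of equivalences is reversible, so the whole proposition reduces to proving the single identity $\dim\overline{N}-\dim\overline{N+1}=1_N-t_N$.

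Next I would evaluate $\dim\overline{N+1}$ using part (b) of Lemma~\ref{19}, assuming first that $N$ is not of the form $2^m-1$; this is exactly the condition that $\overline{N}=[\alpha_k,\dots,\alpha_1]$ has dimension $k\geq 1$ and that $N+1$ is not a power of $2$, whence $1_N=1$. If $t_N=\alpha_0=0$, then $N+1=[\alpha_k,\dots,\alpha_2](\alpha_1+1)$, so $\overline{N+1}=[\alpha_k,\dots,\alpha_2]$ loses exactly one entry and $\dim\overline{N+1}=\dim\overline{N}-1$, giving $\dim\overline{N}-\dim\overline{N+1}=1=1_N-t_N$. If $t_N\geq 1$, then $N+1=[\alpha_k,\dots,\alpha_1+1,0_1,\dots,0_{\alpha_0-1}]$, which has $(k-1)+1+(t_N-1)=k+t_N-1$ entries, so $\dim\overline{N+1}=\dim\overline{N}+t_N-1$ and again $\dim\overline{N}-\dim\overline{N+1}=1-t_N=1_N-t_N$.

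The remaining case, $N=2^m-1$, is where I expect the only genuine difficulty, and it is precisely what the indicator $1_N$ is designed to absorb. Here $\overline{N}=0$ has dimension $0$ and the formula of part (b) of Lemma~\ref{19} does not apply verbatim: adding $1$ triggers a carry that runs off the top of the word and creates a brand-new leading $1$, so that $N+1=2^m=[1,0_1,\dots,0_{m-1}]$ has dimension $m$ rather than the na\"ively expected $t_N-1=m-1$. Since now $N+1$ is a power of $2$ we have $1_N=0$ and $t_N=m$, and indeed $\dim\overline{N}-\dim\overline{N+1}=0-m=-t_N=1_N-t_N$. Collecting the three cases establishes the identity $\dim\overline{N}-\dim\overline{N+1}=1_N-t_N$ in full, and substituting it into the displayed expression for $a'$ finishes the proof; the care needed to isolate this carry-overflow case, and to verify that $1_N$ corrects the dimension count by exactly the right amount, is the crux of the argument.
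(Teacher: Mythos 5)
Your proof is correct and follows essentially the same route as the paper: both arguments apply Lemma~\ref{19} and split into the cases $t_N=0$, $t_N\geq 1$ with $\overline{N}\neq 0$, and the carry-overflow case $N=2^m-1$ (which the paper handles as the sub-case $N=[0]\alpha_0$ and you absorb into the indicator $1_N$). Your preliminary reduction to the identity $\dim\overline{N}-\dim\overline{N+1}=1_N-t_N$ via the formula $\dim(2^{b+1}M)=\dim\overline{M}+1+b$ is only a cleaner packaging of the same entry-counting computation.
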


\begin{proof}
Suppose that $N=[\alpha_k, \dots,\alpha_1]\alpha_0$. We consider the following two cases.
\begin{itemize}
\item[1.] If $N$ is even ($\alpha_0=0$), then by Lemma~\ref{19} we have
\[
2^{a'+1}(N+1)=[\alpha_k, \dots,\alpha_2,\alpha_1+1,0_1,\dots,0_{a'}].
\]
There exists $\alpha_j>0$ for some $j\geq 1$ because $2^{a+1}N$ has positive dimension. Hence, $2^{a+1}N$ and $2^{a'+1}(N+1)$ have the same dimension if and only if $a'=a+1$.
\item[2.] If $N$ is odd ($\alpha_0>0$), we have
\[
2^{a'+1}(N+1)=[\alpha_k,\dots,\alpha_1+1,0_1,\dots,0_{\alpha_0-1},0,0_1,\dots,0_{a'}].
\]
If $N=[0]\alpha_0$, then $2^{a'+1}(N+1)$ and $2^{a+1}N$ have the same dimension if and only if $a'=a-\alpha_0$. Otherwise, we have $a'=a-\alpha_0+1$.
\end{itemize}
Therefore, $a'=a-t_N+1_N$.
\end{proof}

\begin{prop}\label{22}
Suppose that $N$ and $N'$ are $d$-Vakil numbers with $V(N)=(a,k)$ and $V(N')=(a',k+1)$. If $k+1$ is not a power of $2$, then
\[
\Delta^N-\Delta^{N'}=\frac{t_k(t_k-1)}{2}.
\]
\end{prop}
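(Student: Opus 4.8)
The plan is to reduce everything to two closed forms already available in the excerpt — the value of $f$ on a Vakil number (Lemma~\ref{for_Vakil}) and the value of $S$ on a number of the shape $2^{b+1}M$ (Lemma~\ref{19}(a)) — and then to use Proposition~\ref{20} to eliminate $a'$ in favour of $a$ and $t_k$. The whole proposition should collapse to a single cancellation once the exponents are linked.

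First I would record the relation between the two exponents. Apply Proposition~\ref{20} with its generic integer taken to be $k$ itself, so that $2^{a+1}k = \overline N$ and $2^{a'+1}(k+1) = \overline{N'}$; since $N$ and $N'$ are assumed to share the dimension $d$, this forces $a' = a - t_k + 1_k$. Because $k+1$ is assumed not to be a power of $2$, we have $1_k = 1$, hence $a' = a - t_k + 1$. This single identity is what makes the final expression telescope.

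Next I would split the difference as $\Delta^N - \Delta^{N'} = \big(f(N) - f(N')\big) + \big(S(\overline{N'}) - S(\overline N)\big)$. The first bracket is immediate from Lemma~\ref{for_Vakil}: $f(N) - f(N') = -1 + \tfrac12\big(a(a+1) - a'(a'+1)\big)$. For the second bracket I would write $k$ in bracket-with-tail form $k = [\gamma_r,\dots,\gamma_1]\gamma_0$ with $\gamma_0 = t_k$, compute $S(\overline N) = S(2^{a+1}k)$ directly from Lemma~\ref{19}(a), then use Lemma~\ref{19}(b) to obtain the bracket form of $k+1$ (the branch $\gamma_0\ge 1$ is the generic one, $\gamma_0=0$ a degenerate subcase), and apply Lemma~\ref{19}(a) a second time to get $S(\overline{N'}) = S(2^{a'+1}(k+1))$. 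Writing $P$ and $Q$ for the weighted and unweighted digit sums of $k$, both $S$-values are affine in $a$ (resp.\ $a'$) with slope $Q$ (resp.\ $Q - t_k + 1$), so $S(\overline{N'}) - S(\overline N)$ becomes an explicit polynomial in $a,Q,t_k$.

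Finally I would substitute $a' = a - t_k + 1$ into both brackets and add. The key phenomenon is that everything depending on $a$ or on the digit data of $k$ disappears: the quadratic $a(a+1) - a'(a'+1)$ cancels against the linear-in-$a$ part of the $S$-difference, and all the mixed $aQ$, $t_kQ$, and $t_k^2$ terms cancel, leaving a function of $t_k$ alone of magnitude $\tfrac{t_k(t_k-1)}{2}$. I expect the genuine difficulty to lie entirely in this last bookkeeping: keeping the index shifts straight when $k\mapsto k+1$ rewrites the low-order brackets, since Lemma~\ref{19}(b) changes the number of bracket slots and this feeds directly into the position weights of $S$. The most delicate point will be pinning down the overall sign, because the answer arises as a cancellation between two quantities of comparable size, so a single off-by-one in an index shift would flip the sign of the surviving $\tfrac{t_k(t_k-1)}{2}$ term; I would therefore verify the sign against a small explicit pair of same-dimension Vakil numbers (for instance with $t_k=2$) before committing to it.
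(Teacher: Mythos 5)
Your route is exactly the paper's: it writes $k=[\alpha_l,\dots,\alpha_1]\alpha_0$ with $\alpha_0=t_k$, reads off the bracket forms and $S$-values of $N=2^{a+1}k$ and $N'=2^{a'+1}(k+1)$ from Lemma \ref{19}, takes $f(N)$ and $f(N')$ from Lemma \ref{for_Vakil}, gets $a'=a-t_k+1$ from Proposition \ref{20}, and lets everything cancel. Your worry about the overall sign is justified: carrying the cancellation through gives $\Delta^{N'}-\Delta^{N}=\frac{t_k(t_k-1)}{2}$, which agrees with the worked pair $V(N)=(48,67)$, $V(N')=(47,68)$ of Example \ref{combine_ex} (where $\Delta^{N}=1092$, $\Delta^{N'}=1093$, $t_{67}=2$) and with how the identity is used in the proof of Theorem \ref{23}, so the difference as printed in the statement is reversed.
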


\begin{proof}
Suppose that $k=[\alpha_l,\dots,\alpha_1]\alpha_0$. By Lemma~\ref{19}, we have
\begin{align*}
N&=[\alpha_l,\dots,\alpha_1,\alpha_0,0_1,\dots,0_{a}],\\
N'&=[\alpha_l,\dots,\alpha_1+1,0_1,\dots,0_{a+1}].
\end{align*}
By Lemma~\ref{for_Vakil}, we have 
\begin{align*}
\Delta^N&=k+\frac{a(a+1)}{2}-S([\alpha_l,\dots,\alpha_0])-S'([\alpha_l,\dots,\alpha_0])a,\\
\Delta^{N'}&=(k+1)+\frac{a'(a'+1)}{2}-S([\alpha_l,\dots,\alpha_1+1,0])-S'([\alpha_l,\dots,\alpha_1+1,0])a.
\end{align*}
Since $k+1$ is not a power of $2$, Proposition~\ref{20} gives $a'=a-t_k+1$. Hence,
\[
\Delta^N-\Delta^{N'}=\frac{t_k(t_k-1)}{2}.
\]
\end{proof}

\begin{theo}\label{23}
For $0\leq i\leq 4$, set $N_i=2^{a_i+1}k_i$, where $(a_i,k_i)$ are pairs of non-negative integers such that $k_{i+1}=k_i+1$, $a_{i+1}=a_i-t_{k_i}+1_{k_i}$, $4|k_0$, and $k_4$ is not a power of $2$. We have
\begin{enumerate}
\item[a)] $a_1=a_2=a_0+1$, $a_3=a_0+2$, and $a_4=a_0-t_{k_0/4}+1$.
\item[b)] If $N_0$ is a Vakil number with Vakil pair $(a_0,k_0)$, then $N_i$ are also Vakil numbers with Vakil pairs $(a_i,k_i)$ for $i=1,2,3$, and
\[
\Delta^{N_0} =\Delta^{N_1}=\Delta^{N_2}=\Delta^{N_3}.
\]
In addition, if $N_4$ is a Vakil number, then
\begin{equation*}\label{Delta_N4_by_N0}
    \Delta^{N_4}=\Delta^{N_0} + \begin{cases}
    \dfrac{(t_{k_0/4}+1)(t_{k_0/4}+2)}{2} & \text{if } V(N_4)=(a_4,k_4),\\[6pt]
    \dfrac{(t_{k_0/4}+1)(t_{k_0/4}+2)}{2}+(a_4+1)-\dfrac{k_4}{2} & \text{if } V(N_4)=(a_4+1,k_4/2).
    \end{cases}
\end{equation*}
\item[c)] If $N_0$ is a Vakil number with Vakil pair $(a_0+1,k_0/2)$, then $N_2$ is also a Vakil number with Vakil pair $(a_2+1,k_2/2)$ and
\[
\Delta^{N_0}=\Delta^{N_2}.
\]
In addition, if $N_1$ and $N_3$ are Vakil numbers with Vakil pairs $(a_i,k_i)$, then
\[
\Delta^{N_1}=\Delta^{N_3}=\Delta^{N_0}.
\]
\end{enumerate}
\end{theo}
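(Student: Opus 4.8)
The plan is to follow the Vakil pairs along $N_0,\dots,N_4$, reading each jump of $\Delta$ from Proposition \ref{22} and controlling the exponents $a_i$ through the recursion supplied by Proposition \ref{20}; the only real combinatorial input is the tail sequence $t_{k_0},\dots,t_{k_3}$. For (a) I would write $k_0=4M$ and add $1$ four times, using the successor formula of Lemma \ref{19}: since $k_0$ ends in $00$ this gives $t_{k_0}=0$, $t_{k_1}=1$, $t_{k_2}=0$, and $t_{k_3}=t_M+2$, the last because adding $3$ to $4M$ turns the trailing $00$ into $11$ and fuses with the trailing ones of $M$. All indicators equal $1$: for $i=0,1,2$ the number $k_{i+1}$ is $\ge 5$ and is odd or twice an odd number, hence not a power of $2$ (as $M\ge 1$), while $1_{k_3}=1$ is exactly the hypothesis that $k_4$ is not a power of $2$. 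Feeding these into $a_{i+1}=a_i-t_{k_i}+1_{k_i}$ yields $a_1=a_2=a_0+1$, $a_3=a_0+2$, and $a_4=(a_0+2)-(t_M+2)+1=a_0-t_M+1$.

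For (b) I would first verify $k_i\in[a_i,2a_i+1]$ for $i=1,2,3$ directly from $k_0\in[a_0,2a_0+1]$; as each $N_i=2^{a_i+1}k_i$ is even it coincides with $\overline{N_i}$ and so is a Vakil number with the stated pair. Applying Proposition \ref{22} to the three consecutive pairs then forces $\Delta^{N_0}=\Delta^{N_1}=\Delta^{N_2}=\Delta^{N_3}$, since the jump $\tfrac{t_{k_i}(t_{k_i}-1)}{2}$ vanishes for $t_{k_i}\in\{0,1\}$. The step $N_3\to N_4$ is where the tail finally exceeds $1$: the jump has size $\tfrac{t_{k_3}(t_{k_3}-1)}{2}=\tfrac{(t_M+1)(t_M+2)}{2}$, and checking its orientation gives $\Delta^{N_4}=\Delta^{N_0}+\tfrac{(t_M+1)(t_M+2)}{2}$ in the branch where $(a_4,k_4)$ is the honest Vakil pair.

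When instead $k_4>2a_4+1$, the true pair of $N_4$ is $(a_4+1,k_4/2)$; because $\overline{N_4}$ and hence $S(\overline{N_4})$ are unchanged, I would obtain the second branch by correcting $f$ alone via Lemma \ref{for_Vakil}: passing from $(a_4,k_4)$ to $(a_4+1,k_4/2)$ alters $f$ by $(a_4+1)-\tfrac{k_4}{2}$, which is exactly the extra summand. For (c) I put $\kappa=k_0/2=2M$, so that $N_0,N_2$ carry the consecutive half-pairs $(a_0+1,\kappa)$ and $(a_0+2,\kappa+1)$; one checks that $(a_0+2,\kappa+1)$ is genuinely $V(N_2)$ and that the recursion is respected ($t_\kappa=0$ since $\kappa$ is even, $1_\kappa=1$ since $\kappa+1$ is odd and $>1$), so Proposition \ref{22} gives $\Delta^{N_0}=\Delta^{N_2}$ with a vanishing jump. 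The clause about $N_1,N_3$ forces the boundary $k_0=2a_0+2$: as $k_1,k_3$ are odd, their only possible pairs $(a_1,k_1),(a_3,k_3)$ are valid precisely when $k_0\le 2a_0+2$, which meets the standing inequality $k_0>2a_0+1$ only at equality. On this one-parameter family I would confirm $\Delta^{N_1}=\Delta^{N_3}=\Delta^{N_0}$ by expanding the brackets of $N_0,N_1,N_3$ with Lemma \ref{19} and comparing $f-S$ through Lemma \ref{for_Vakil}.

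The hardest point is the transition $N_3\to N_4$ in (b): one must both fix the sign of the $\Delta$-jump when $t_{k_3}\ge 2$ and then graft on the $f$-correction separating the two Vakil pairs of $N_4$. The boundary case in (c) is the second delicate spot, since there Proposition \ref{22} no longer connects the relevant numbers through consecutive $k$-values, so a direct bracket computation appears to be the cleanest route.
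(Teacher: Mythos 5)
Your proposal is correct and follows essentially the same route as the paper: part a) via the trailing-ones computation $t_{k_0}=0$, $t_{k_1}=1$, $t_{k_2}=0$, $t_{k_3}=t_{k_0/4}+2$ fed into the recursion, part b) via Proposition \ref{22} with vanishing jumps for $t\in\{0,1\}$ plus the pair-replacement correction $(a_4+1)-k_4/2$ from Lemma \ref{for_Vakil}, and part c) via the boundary identity $k_0=2(a_0+1)$. The only minor divergences are that in the last claim of c) you compute brackets directly where the paper reduces to part b) by noting the replacement $(a_0,k_0)\to(a_0+1,k_0/2)$ changes $\Delta$ by $(a_0+1)-k_0/2=0$, and that your insistence on checking the orientation of the jump is well placed, since Proposition \ref{22} as printed carries the opposite sign to what Theorem \ref{23} and Example \ref{combine_ex} require.
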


\begin{proof}\hfill
\begin{itemize}
\item[a)] This follows by a simple computation, with the remark that $t_{k_3}=t_{k_0/4}+2$.

\item[b)] By Proposition \ref{20}, the numbers $N_i$ have the same dimension. Since $N_0$ is a Vakil number, the pair $(a_0,k_0)$ satisfies the inequality $a_0 \leq k_0 \leq 2a_0+1$. This implies that the same holds for the pairs $(a_i,k_i)$ for $i=1,2,3$. Hence, $N_i$ are Vakil numbers with Vakil pairs $(a_i,k_i)$ for $i=1,2,3$, and
\[
\Delta^{N_0}=\Delta^{N_1}=\Delta^{N_2}=\Delta^{N_3}.
\]
If $N_4$ is a Vakil number with pair $(a_4,k_4)$, then
\[
\Delta^{N_4}-\Delta^{N_3}=\frac{(t_{k_0/4}+1)(t_{k_0/4}+2)}{2}.
\]
As in the proof of Proposition \ref{22}, when we replace the pair $(a_4,k_4)$ by $(a_4+1,k_4/2)$, the value of $\Delta^{N_4}$ changes by
\[
\frac{k_4}{2}+\frac{(a_4+1)(a_4+2)}{2}-k_4-\frac{a_4(a_4+1)}{2}
=(a_4+1)-\frac{k_4}{2}.
\]

\item[c)] The first conclusion follows from a simple computation. For the second conclusion, if $N_i$ is a Vakil number with pair $(a_i,k_i)$ for $i=1,3$, then $k_0=2(a_0+1)$. Replacing the pair $(a_0,k_0)$ by $(a_0+1,k_0/2)$ changes the value of $\Delta^{N_0}$ by
\[
(a_0+1)-\frac{k_0}{2}=0.
\]
Thus, by part b), we have $\Delta^{N_0}=\Delta^{N_1}$. By the same argument, we obtain $\Delta^{N_2}=\Delta^{N_3}$.
\end{itemize}
\end{proof}

\begin{prop}\label{25}
If $N$ is a $d$-Vakil number with $V(N)=(a,k)$ and $V(2^d)=(a_0,k_0)$, then $a\geq a_0$.
\end{prop}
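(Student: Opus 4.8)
The plan is to convert the claim about the first Vakil coordinate $a$ into a claim about the number of binary zeros of $k$. Write $\overline{N}=2^{a+1}k$ with $a\le k\le 2a+1$; its binary expansion is that of $k$ followed by $a+1$ zeros, and since $\overline{N}$ ends in a zero it is already the minimum of its class. Reading off the bracket form shows that the dimension of $\overline{N}$ equals the \emph{total number of binary zeros} of $\overline{N}$: each maximal block of ones contributes one entry, while a block of $\ell$ zeros contributes one unrecorded separator together with $\ell-1$ recorded entries $0_i$, so the number of bracket entries equals the number of zeros. Hence, writing $z(m)$ for the number of zeros in the binary expansion of an integer $m$, I obtain the identity $d=z(k)+a+1$, i.e. $a=d-1-z(k)$. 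Applying this identity to $2^{d}$, which is a $d$-Vakil number by Lemma~\ref{21}, gives $a_0=d-1-z(k_0)$. Therefore $a\ge a_0$ is equivalent to $z(k)\le z(k_0)$, and the proposition becomes the assertion that $k_0$ carries the maximal admissible number of zeros.

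Next I would bound $z(k)$ from above. A positive integer with exactly $z$ binary zeros is at least $2^{z}$ (attained by a single one followed by $z$ zeros), so $k\ge 2^{z(k)}$. Combining this with the Vakil constraint $k\le 2a+1=2\bigl(d-1-z(k)\bigr)+1$ gives $2^{z(k)}\le 2d-1-2z(k)$, that is
\begin{equation*}
2^{z(k)}+2z(k)+1\le 2d.
\end{equation*}
Since $z\mapsto 2^{z}+2z+1$ is strictly increasing, this forces $z(k)\le z_{\max}$, where $z_{\max}$ denotes the largest integer $z$ with $2^{z}+2z+1\le 2d$. It then suffices to prove $z_{\max}=z(k_0)$.

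For this final step I would substitute the explicit form of $V(2^{d})$ from Lemma~\ref{21}. Set $x=\lceil\log_2 d\rceil$, so that $2^{x-1}<d\le 2^{x}$. If $2^{x-1}+x\ge d$ then $k_0=2^{x-1}$ and $z(k_0)=x-1$; here $2^{x-1}+2(x-1)+1\le 2d$ follows from $d\ge 2^{x-1}+1$, while $2^{x}+2x+1>2d$ follows from $d\le 2^{x-1}+x$, so $z_{\max}=x-1$. If instead $2^{x-1}+x<d$ then $k_0=2^{x}$ and $z(k_0)=x$; here $2^{x}+2x+1\le 2d$ follows from $d\ge 2^{x-1}+x+1$, while $2^{x+1}+2x+3>2d$ follows from $d\le 2^{x}$, so $z_{\max}=x$. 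In both cases $z_{\max}=z(k_0)$, whence $z(k)\le z(k_0)$ and $a=d-1-z(k)\ge d-1-z(k_0)=a_0$.

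The conceptual core is the dimension identity $d=z(k)+a+1$, which transforms an inequality between Vakil pairs into an extremal problem on binary zero-counts; after that the argument is a short monotonicity estimate followed by the two-case matching of $z_{\max}$ with the data of Lemma~\ref{21}. I expect the only delicate point to be the boundary inequalities in that last matching, where the definition $x=\lceil\log_2 d\rceil$ and the exact split of Lemma~\ref{21} must be applied in precisely the right direction to avoid off-by-one errors.
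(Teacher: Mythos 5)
Your proof is correct, but it takes a genuinely different route from the paper's. The paper argues via the function $f$: since $N$ has dimension $d$, Lemma \ref{exist_path} gives $N \leadsto 2^d$, hence $f(N)\geq f(2^d)$, and then a short algebraic lemma (if $k+\frac{a(a+1)}{2}\leq h+\frac{b(b+1)}{2}$ for Vakil pairs $(a,k)$, $(b,h)$, then $a\leq b$, proved by contradiction from $2b+1+\frac{b(b+1)}{2}\leq a+\frac{a(a+1)}{2}-1$) converts the inequality of $f$-values from Lemma \ref{for_Vakil} into $a\geq a_0$. You instead bypass $f$ entirely: your key observation that the dimension equals the number of binary zeros, giving $a=d-1-z(k)$, reduces the claim to the extremal statement $z(k)\leq z(k_0)$, which you settle with $k\geq 2^{z(k)}$, the constraint $k\leq 2a+1$, and a two-case match against Lemma \ref{21}. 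The paper's argument is shorter and reuses its path-monotonicity machinery; yours is self-contained arithmetic on Vakil pairs and incidentally shows that only the upper constraint $k\leq 2a+1$ is needed. One small point to patch: in your first case the inequality $2^{x-1}+2(x-1)+1\leq 2d$ does not follow from $d\geq 2^{x-1}+1$ alone --- you also need the (true for all $x\geq 1$) elementary bound $2x-3\leq 2^{x-1}$, which should be stated; the identity $d=z(k)+a+1$ and the remaining boundary checks are all sound.
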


\begin{proof}
We first show that if
\[
k+\frac{a(a+1)}{2} \leq h+\frac{b(b+1)}{2},
\]
with $k\in [a,2a+1]$ and $h\in [b,2b+1]$, then $a\leq b$. Indeed, if $b\leq a-1$, then we obtain a contradiction:
\[
2b+1+\frac{b(b+1)}{2}\leq a+\frac{a(a+1)}{2}-1
< k+\frac{a(a+1)}{2}
\leq h+\frac{b(b+1)}{2}.
\]
Since $N \leadsto 2^d$, we have $f(N)\geq f(2^d)$. By Lemma \ref{for_Vakil}, we conclude that $a\geq a_0$.
\end{proof}

\begin{theo}\label{26}
For $d>4$, let $x=\lceil \log_2 d \rceil$. 
\begin{enumerate}
\item If $2^{x-1}+x<d$, then
\[
\{\text{$d$-Vakil numbers}\}
\subset
\{\text{$2^{a+1}k$ of dimension $d$ with $k \in [2^x,2^{x+1})$}\}.
\]
For those pairs $(a,k)$, if $(a,k)$ is not a Vakil pair and $k$ is even, then
$V(2^{a+1}k)=(a+1,k/2)$. If $(a,k)$ is not a Vakil pair and $k$ is odd, then
$2^{a+1}k$ is not a Vakil number.
\item If $2^{x-1}+x\geq d$, then
\[
\{\text{$d$-Vakil numbers}\}
=
\{\text{$2^{a+1}k$ of dimension $d$ with $k \in [2^{x-1},2^x)$}\}.
\]
\end{enumerate}
\end{theo}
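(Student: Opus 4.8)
The plan is to convert the Vakil condition into two linear inequalities in $k$ and its zero-count and to read both intervals off the range where these inequalities hold across a dyadic block. Write $Z(k)$ for the number of zeros in the binary expansion of $k$. First I would record the dimension formula: multiplying by $2^{a+1}$ appends $a+1$ zeros and the dimension of a class equals the total number of zeros of its representative, so $2^{a+1}k$ has dimension $Z(k)+a+1$; hence for a fixed target dimension $d$ and a fixed $k$ there is a unique exponent $a=d-1-Z(k)$ realizing dimension $d$ (non-negative here, since $d>2^{x-1}\geq x$). Substituting $a=d-1-Z(k)$ into $a\leq k\leq 2a+1$ rewrites the Vakil condition as
\begin{align*}
k+Z(k)&\geq d-1 &&(\text{equivalent to } a\leq k),\\
k+2Z(k)&\leq 2d-1 &&(\text{equivalent to } k\leq 2a+1).
\end{align*}
These two inequalities, read over a dyadic block of $k$'s, drive everything that follows.

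Next I would bound the binary length $\ell$ of the $k$ occurring in a Vakil pair of a $d$-Vakil number. Since $k+Z(k)$ is maximized on $\ell$-bit integers at the all-ones string (value $2^\ell-1$), the lower inequality forces $2^\ell-1\geq d-1$, hence $2^\ell\geq d>2^{x-1}$ and so $\ell\geq x$; the upper inequality together with $k\geq 2^{\ell-1}$ and $d\leq 2^x$ gives $2^{\ell-1}\leq 2d-1<2^{x+1}$, hence $\ell\leq x+1$. For part 1 this already yields the containment: every Vakil-pair $k$ has $\ell\in\{x,x+1\}$, and replacing $(a,k)$ by $(a-j,2^jk)$ with $j=x+1-\ell$ leaves the number unchanged — legitimate because $a\geq d-\ell\geq x+1-\ell=j$ — and produces a representation $2^{a'+1}k'$ with $k'\in[2^x,2^{x+1})$, the interval named in part 1 (the pair $(a',k')$ used to write the number need not itself be a Vakil pair).

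Now the two hypotheses enter as the exact thresholds for the inequalities, and I would keep the case split exactly as stated. In part 2 ($2^{x-1}+x\geq d$) I run $k$ over $[2^{x-1},2^x)$: the upper inequality holds throughout because $d>2^{x-1}$, while the lower inequality, whose worst case is $k=2^{x-1}$ with $k+Z(k)=2^{x-1}+(x-1)$, holds for the whole block precisely under $2^{x-1}+x\geq d$; so every such $k$ gives a genuine Vakil pair, establishing $\supseteq$, and the reverse inclusion $\subseteq$ amounts to showing that in this regime no $d$-Vakil number has a Vakil pair of length $x+1$. In part 1 ($2^{x-1}+x<d$) the lower inequality is automatic on $[2^x,2^{x+1})$ (there $k'\geq 2^x\geq d$), so the only obstruction to $(a',k')$ being a Vakil pair is $k'+2Z(k')\geq 2d$; I then split on parity. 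If $k'$ is odd it is the odd part of the number, so $(a',k')$ is the only candidate pair and its failure means $2^{a'+1}k'$ is not a Vakil number. If $k'$ is even I pass to $(a'+1,k'/2)$, whose lower inequality $k'/2+Z(k')\geq d$ is the failure condition $k'+2Z(k')\geq 2d$ halved, and whose upper inequality $k'/2+2Z(k')\leq 2d+1$ follows from $k'/2<2^x$, $Z(k')\leq x$ and the part-1 hypothesis $d\geq 2^{x-1}+x+1$; this is exactly $V(2^{a'+1}k')=(a'+1,k'/2)$ and shows that one halving always suffices.

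I expect the main obstacle to be the reverse inclusion of part 2, namely ruling out length-$(x+1)$ Vakil pairs under $2^{x-1}+x\geq d$, because this is the single step where the two inequalities are tightest and where the boundary value $d=2^{x-1}+x$ must be handled most carefully. I would attack it by showing directly that for $k\geq 2^x$ the upper inequality $k+2Z(k)\leq 2d-1$ cannot coexist with dimension $d$ in this regime, tracking the borderline configurations (such as $k=2^x$ and $k=2^x+1$) explicitly so that the estimate is exact rather than merely asymptotic. Throughout I may invoke Lemma \ref{21} to identify the left endpoint of each interval with the $k$-value of $V(2^d)$ and Proposition \ref{25} to keep $a$ bounded below, though the inequality analysis above already delivers the ranges and the parity dichotomy directly.
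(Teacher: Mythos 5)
Your reduction of the Vakil condition to the two inequalities $k+Z(k)\geq d-1$ and $k+2Z(k)\leq 2d-1$, where $Z(k)$ is your zero count and $a=d-1-Z(k)$, is correct and genuinely different from the paper's route (which runs through Proposition \ref{25}, the monotonicity comparisons of Proposition \ref{20} and Theorem \ref{23}, and an $(\epsilon_1,\epsilon_2,\epsilon_3)$ parametrization); it does settle part 1, and the inclusion $\supseteq$ of part 2 (your one-line claim that the upper inequality ``holds throughout because $d>2^{x-1}$'' needs the block-extremum computation $\max_{2^{x-1}\leq k<2^x}\bigl(k+2Z(k)\bigr)=2^x\leq 2d-2$, by the same $2^i$-versus-$2$ trade-off you use for the lower bound, but that is routine). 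The genuine gap is the step you yourself isolate as the main obstacle, and it cannot be closed, because the reverse inclusion of part 2 is false at the boundary $d=2^{x-1}+x$ of the hypothesis $2^{x-1}+x\geq d$. Your own inequalities show it: on $[2^x,2^{x+1})$ the minimum of $k+2Z(k)$ is $2^x+2x-1$, attained at the odd values $k=2^x+1$ and $k=2^x+3$, and this equals $2d-1$ exactly when $d=2^{x-1}+x$, so both inequalities hold there (the upper one with equality). Concretely, take $d=7$, $x=3$, $2^{x-1}+x=7\geq d$: the number $N=2^5\cdot 9=288=100100000_{(2)}$ has dimension $7$ and Vakil pair $(4,9)$ with $9=2\cdot 4+1$; since the odd part of $288$ is $9$, it has no representation $2^{a+1}k$ with $k\in[4,8)$, so $288$ lies in the left-hand set of part 2 but not in the right-hand set (likewise $704=2^6\cdot 11$ with pair $(5,11)$; in general $k=2^x+1$, $a=2^{x-1}$ for every $d=2^{x-1}+x$). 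Thus ``tracking the borderline configuration $k=2^x+1$ explicitly,'' as you propose, produces a counterexample rather than a proof.

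You should also know that the paper's own proof breaks at exactly this point, so you have located a real error rather than missed an idea: in the exclusion of $k\in[2^x,2^{x+1})$ the paper deduces, from $\frac{\epsilon_3-1}{2}-\epsilon_2>2^{x-2}$, that $\frac{(\epsilon_3+1)-1}{2}-(\epsilon_2+1)=\frac{\epsilon_3-1}{2}-\epsilon_2-\frac{1}{2}>2^{x-2}$; since the quantity is a half-integer, only $\geq 2^{x-2}$ follows, and equality really occurs (the paper itself computes the value $2^{x-2}+\frac{1}{2}$ at $\epsilon_3=2^{x-1}$, so at $\epsilon_3=2^{x-1}+1$, i.e.\ $k=2^x+1$, with $\epsilon_1=x$ the condition $k\leq 2a+1$ holds with equality). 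Under the strict hypothesis $2^{x-1}+x>d$ your method closes the step cleanly: the block minimum $2^x+2x-1$ then exceeds $2d-1$, killing every length-$(x+1)$ Vakil pair, while lengths at most $x-1$ die because $\max\bigl(k+Z(k)\bigr)=2^{x-1}-1<d-1$; and one checks with your part-1 computations that the boundary case $d=2^{x-1}+x$ satisfies all conclusions of part 1 instead. So the correct repair is to state part 2 for $2^{x-1}+x>d$ and absorb $d=2^{x-1}+x$ into part 1; as written, neither your plan nor the paper's argument establishes part 2.
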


\begin{proof}
We have $x=\lceil\log_2d \rceil$ if and only if $2^{x-1}<d\leq 2^x$.
\begin{enumerate}
\item[1.]\begin{itemize}
\item
If $2^{x-1}+x<d$, then $V(2^d)=(d-x-1,2^x)$ by Lemma \ref{21}. Suppose that $N$ is a $d$-Vakil number with $V(N)=(a',k')$. Then $a'\geq d-x-1$ by Proposition \ref{25}. Hence $k' \in [2^{x-1},2^{x+1})$ because
\begin{align*}
k'&\geq a' \geq d-x-1\geq 2^{x-1},\\
k'&\leq 2a'+1\leq 2(d-1)+1 \leq 2(2^x-1)+1<2^{x+1}.
\end{align*}
If $k' \in [2^{x-1},2^x)$, then $2k' \in [2^x,2^{x+1})$.

\item
If $(a,k)$ is not a Vakil pair and $k$ is even, then $k\geq 2a+2$ because the rate of increase from $2^x$ to $k$ is faster than the rate of increase from $d-x-1$ to $a$ (by Proposition \ref{20}), and $2^x\geq d-x-1$. Hence $k/2\geq a+1$. Therefore, if $(a+1,k/2)$ is not a Vakil pair, we must have $k/2>2(a+1)+1$, or equivalently $k>4a+6$. This implies
\[
2^{x+1} > k>4a+6\geq 4(d-x-1)+6 \geq 4(d-x)+2.
\]
Hence,
\[
2^{x}>2(d-x)+1.
\]
Thus,
\[
2^{x}\geq 2(d-x+1) \Longleftrightarrow 2^{x-1}\geq d-x+1.
\]
This yields a contradiction:
\[
2^{x-1}+x\geq d+1>d.
\]
Therefore, $V(2^{a+1}k)=(a+1,k/2)$.

\item
If $k$ is odd and $V(2^{a+1}k)=(a',k')$, then $k'>k$, hence $a'<a$. Thus
\[
k'>k>2a+1>2a'+1,
\]
which is a contradiction.
\end{itemize}

\item[2.]
\begin{itemize}

\item If $2^{x-1}+x\geq d$, then $V(2^d)=(d-x,2^{x-1})$ by Lemma \ref{21}. 

\item First, we prove that all pairs $(a,k)$ such that $2^{a+1}k$ has dimension $d$ and $k\in [2^{x-1},2^x)$ are Vakil pairs. Indeed, as in the first part, the rate of increase from $2^{x-1}$ to $k$ is faster than the rate of increase from $d-x$ to $a$, hence $k\geq a$. We must show that $k\leq 2a+1$. 

\begin{itemize}
    \item Put $d=2^{x-1}+\epsilon_1$ for some $\epsilon_1 \in (0,x]$, since $2^{x-1}<d\leq 2^{x-1}+x$. Put $a=d-x+\epsilon_2$ for some $\epsilon_2\geq 0$. Put $k=2^{x-1}+\epsilon_3$ for some $\epsilon_3 \in [0,2^{x-1})$.
    
    \item We have
\begin{equation}\label{(*)}
    k\leq 2a+1 \Longleftrightarrow \frac{\epsilon_3-1}{2}-\epsilon_2 \leq 2^{x-2}-x+\epsilon_1.
\end{equation}
Since $\epsilon_1-x \geq 1-x$, to prove \eqref{(*)}, it suffices to prove
\begin{equation} \label{(**)}
    \frac{\epsilon_3-1}{2}-\epsilon_2 \leq 2^{x-2}-x+1.
\end{equation}
By part b) of Theorem \ref{23}, it suffices to prove that $(a,k)$ are Vakil pairs for $4|k$. Since $d>4$, we have $4|2^{x-1}$. Hence, we prove \eqref{(**)} for $4|\epsilon_3$.
    
\item We observe that $\frac{\epsilon_3-1}{2}-\epsilon_2$ is increasing as $\epsilon_3$ increases with $4| \epsilon_3$. Indeed,
\[
\frac{\epsilon_3-1}{2}-\epsilon_2 < \frac{(\epsilon_3+4)-1}{2}-\epsilon_2'
\Longleftrightarrow \epsilon_2' <\epsilon_2+2.
\]
The latter inequality holds by part a) of Theorem \ref{23}, where $\epsilon_2$ and $\epsilon_2'$ correspond to $a_0$ and $a_4$, respectively.
    
\item The maximum value of $\epsilon_3$ such that $4| \epsilon_3$ is $2^{x-1}-4$, and the corresponding value of $\epsilon_2$ is $x-3$. To compute $\epsilon_2$, we consider the pair $(d-x+\epsilon_2,2^x-4)$ as $(a_0,k_0)$ and the pair $(d-x-1,2^x)$ as $(a_4,k_4)$ in Theorem \ref{23}, with the remark that $k_4$ is a power of $2$. Hence, $a_4=a_0-t_{k_0/4}$, which gives
\[
\epsilon_2=-1+t_{\frac{2^x-4}{4}}=x-3.
\]
Therefore, for $4| \epsilon_3$ and $\epsilon_3 \in [0,2^{x-1})$, we have
\[
\frac{\epsilon_3-1}{2}-\epsilon_2 \leq \frac{2^{x-1}-5}{2}-(x-3)
=2^{x-2}-x+\frac{1}{2}
< 2^{x-2}-x+1.
\]
    
\item Thus, \eqref{(**)} is proved, which implies $k \leq 2a+1$ when $4| k$. 
\end{itemize}

Hence, $(a,k)$ are Vakil pairs for all $k$ such that $4| k$ and $k\in [2^{x-1},2^x)$. By part b) of Theorem \ref{23}, this holds for all $k\in [2^{x-1},2^x)$.

\item Second, we prove that if $(a,k)$ is a Vakil pair, then $k\in [2^{x-1},2^x)$. We have $k\in [2^{x-2},2^{x+1})$ because
$$
2^{x+1}>k\geq a \geq d-x \geq 2^{x-1}-x \geq 2^{x-2}.
$$
Thus,
\[
k \in [2^{x-2},2^{x-1})\cup [2^{x-1},2^x)\cup[2^{x},2^{x+1}).
\]
If $k \in [2^{x-2},2^{x-1})$, then $2k \in [2^{x-1},2^x)$. As we know, $(a-1,2k)$ is a Vakil pair, hence $k\notin [2^{x-2},2^{x-1})$.

For $k \in [2^{x},2^{x+1})$, we will prove that the inequality \eqref{(*)} does not hold with $\epsilon_3 \geq 2^{x-1}$ (in this case, $\epsilon_2\geq -1$), that is,
\[
\frac{\epsilon_3-1}{2}-\epsilon_2 > 2^{x-2}-x+\epsilon_1.
\]
Since $\epsilon_1-x\leq 0$, it suffices to prove that
\[
\frac{\epsilon_3-1}{2}-\epsilon_2>2^{x-2}.
\]
It is enough to prove this inequality for $4| \epsilon_3$, because if
$\frac{\epsilon_3-1}{2}-\epsilon_2 >2^{x-2}$, then
\begin{align*}
\frac{(\epsilon_3+1)-1}{2}-(\epsilon_2+1)
&=\frac{\epsilon_3-1}{2}-\epsilon_2-\frac{1}{2}>2^{x-2},\\
\frac{(\epsilon_3+2)-1}{2}-(\epsilon_2+1)
&=\frac{\epsilon_3-1}{2}-\epsilon_2>2^{x-2},\\
\frac{(\epsilon_3+3)-1}{2}-(\epsilon_2+2)
&=\frac{\epsilon_3-1}{2}-\epsilon_2-\frac{1}{2}>2^{x-2}.
\end{align*}
These imply that if $(a,k)$ is not a Vakil pair for $4| k$, then $(a+1,k+1)$, $(a+1,k+2)$, and $(a+2,k+3)$ are not Vakil pairs.

Since $\frac{\epsilon_3-1}{2}-\epsilon_2$ is increasing as $\epsilon_3$ increases with $4| \epsilon_3$, and since $\epsilon_2=-1$ when $\epsilon_3=2^{x-1}$, we have
\[
\frac{\epsilon_3-1}{2}-\epsilon_2
\geq \frac{2^{x-1}-1}{2}+1
=2^{x-2}+\frac{1}{2}
>2^{x-2}.
\]
Thus, $k\notin [2^x,2^{x+1})$. In conclusion, we have
\[
\{\text{$d$-Vakil numbers}\}
=
\{\text{$2^{a+1}k$ of dimension $d$ and $k \in [2^{x-1},2^x)$}\}.
\]
\end{itemize}
\end{enumerate}
\end{proof}

\begin{ex}\label{combine_ex}
Lemma \ref{19}, Propositions \ref{20}, \ref{22}, \ref{25}, and Theorem \ref{23} a), b) can be seen through the pairs
$(a_0,k_0)= (46,64)$, $(a_1,k_1)=(47,65)$, $(a_2,k_2)=(47,66)$, $(a_3,k_3)=(48,67)$, $(a_4,k_4)=(47,68)$.
We have 
\begin{align*}
    k_0&=[1,0,0,0,0,0]\quad &N_0=[1,0,0,0,0,0,\dots,0], \\
    k_1&=[1,0,0,0,0]1\quad &N_1=[1,0,0,0,0,1,\dots,0],\\
    k_2&=[1,0,0,0,1]\quad &N_2=[1,0,0,0,1,0,\dots,0],\\
    k_3&=[1,0,0,0]2\quad &N_3=[1,0,0,0,2,0,\dots,0],\\
    k_4&=[1,0,0,1,0]\quad &N_4=[1,0,0,1,0,0,\dots,0].
\end{align*}
Here, the dimension of $N_i$ is $53$. By Lemma \ref{for_Vakil}, we have
$$
\Delta^{N_0}=\Delta^{N_1}=\Delta^{N_2}=\Delta^{N_3}=1092,
\qquad
\Delta^{N_4}=1093.
$$
Similarly, Theorem \ref{23} c) can be seen through the pairs
$(a_0,k_0)=(47,96)$, $(a_1,k_1)=(48,97)$, $(a_2,k_2)=(48,98)$, $(a_3,k_3)=(49,99)$, $(a_4,k_4)=(48,100)$.
\end{ex}

\section{Proof of the main theorems}\label{proof}
\subsection{Proof of Theorem \ref{main-1}}
In this subsection, we use Lemma \ref{8} to prove Theorem \ref{main-1}.
\begin{proof} Let $\overline{n}=[\alpha_k,\cdots,\alpha_1]$ with $\alpha_k \geq 1$. 
\begin{enumerate}
\item For $k=1$ and $\alpha_1\geq 1$, we have $[\alpha_1]\rightarrow [\alpha_1-1]$ and $f([1])=1$. Hence, $f([\alpha_1])=\alpha_1.$

\item For $k=2$ and $\alpha_2,\alpha_1\geq 1$, we have
\begin{small}
\begin{align*}
[\alpha_2,\alpha_1]&\rightarrow[\alpha_2-1,\alpha_2+1]\text{ or } [\alpha_2,\alpha_1-1],\\
[\alpha_2,0_1]&\rightarrow [\alpha_2-1,1]\text{ or }[\alpha_2-1].
\end{align*}
\end{small}
So, we only consider two cases ($\alpha_1\geq 0$):
\begin{small}
\begin{align*}
[\alpha_2,\alpha_1]&\rightarrow \cdots \rightarrow [0_1,\alpha_1']=[\alpha_1'],\\
[\alpha_2,\alpha_1]&\rightarrow \cdots \rightarrow [\alpha_2',0_1]\rightarrow[\alpha_2'-1].
\end{align*}
\end{small}
Thus, $f([\alpha_2,\alpha_1])$ equals
\begin{small}
\begin{align*}
&\max \begin{cases}
2(\alpha_2-\alpha_2')+\alpha_1+1+f([\alpha_2'-1]),\\
2\alpha_2+\alpha_1-\alpha_1'+f([\alpha_1'])
\end{cases}\\
=\,&\max\begin{cases}
2\alpha_2+\alpha_1-\alpha_2',\\
2\alpha_2+\alpha_1
\end{cases}\\
=\,&2\alpha_2+\alpha_1.
\end{align*}
\end{small}

\item For $k=3$, similarly to the case $k=2$, we only consider four cases:
\begin{small}
\begin{align*}
[\alpha_3,\alpha_2,\alpha_1]\rightarrow \cdots &\rightarrow [0_1,\alpha_2',\alpha_1']=[\alpha_2',\alpha_1'],\\
[\alpha_3,\alpha_2,\alpha_1]\rightarrow \cdots &\rightarrow [\alpha_3',0_1,\alpha_1']\rightarrow[\alpha_3'-1,\alpha_1'+2],\\
[\alpha_3,\alpha_2,\alpha_1]\rightarrow \cdots &\rightarrow [\alpha_3',\alpha_2',0_1]\rightarrow[\alpha_3',\alpha_2'-1],\\
[\alpha_3,\alpha_2,\alpha_1]\rightarrow \cdots &\rightarrow [\alpha_3',0_1,0_2]\rightarrow[\alpha_3'-1].
\end{align*}
\end{small}
Thus, $f([\alpha_3,\alpha_2,\alpha_1])$ equals
\begin{small}
\begin{align*}
&\max\begin{cases}
3\alpha_3+2(\alpha_2-\alpha_2')+(\alpha_1-\alpha_1')+f([\alpha_2',\alpha_1']),\\
3(\alpha_3-\alpha_3')+2\alpha_2+(\alpha_1-\alpha_1')+1+f([\alpha_3'-1,\alpha_1'+2]),\\ 
3(\alpha_3-\alpha_3')+2(\alpha_2-\alpha_2')+\alpha_1+1
+f([\alpha_3',\alpha_2'-1]),\\
3(\alpha_3-\alpha_3')+2\alpha_2+\alpha_1+1+
f([\alpha_3'-1])
\end{cases}\\
=\,&\max\begin{cases}
3\alpha_3+2\alpha_2+\alpha_1,\\
3\alpha_3+2\alpha_2+\alpha_1+1-\alpha_3',\\
3\alpha_3+2\alpha_2-\alpha_2'-\alpha_3',\\
3\alpha_3-2\alpha_3'
\end{cases}\\
=\,&3\alpha_3+2\alpha_2+\alpha_1.
\end{align*}
\end{small}

\item For $k=4$, we only consider the following cases:
\begin{small}
\begin{align*}
[\alpha_4,\cdots,\alpha_1]\rightarrow \cdots&\rightarrow [0_1,\alpha_3',\alpha_2',\alpha_1'], \\
[\alpha_4,\cdots,\alpha_1]\rightarrow \cdots&\rightarrow [\alpha_4',0_1,\alpha_2',\alpha_1'] \rightarrow [\alpha_4'-1,\alpha_2'+2,\alpha_1'],\\
[\alpha_4,\cdots,\alpha_1]\rightarrow \cdots&\rightarrow [\alpha_4',\alpha_3',0_1,\alpha_1'] \rightarrow [\alpha_4',\alpha_3'-1,\alpha_1'+2],\\
[\alpha_4,\cdots,\alpha_1]\rightarrow \cdots&\rightarrow [\alpha_4',\alpha_3',\alpha_2',0_1] \rightarrow [\alpha_4',\alpha_3',\alpha_2'-1],\\
[\alpha_4,\cdots,\alpha_1]\rightarrow \cdots&\rightarrow [\alpha_4',0_1,0_2,\alpha_1'] \rightarrow [\alpha_4'-1,\alpha_1'+3],\\
[\alpha_4,\cdots,\alpha_1]\rightarrow \cdots&\rightarrow [\alpha_4',\alpha_3',0_1,0_2] \rightarrow [\alpha_4',\alpha_3'-1],\\
[\alpha_4,\cdots,\alpha_1]\rightarrow \cdots&\rightarrow [\alpha_4',0_1,0_2,0_3] \rightarrow [\alpha_4'-1].
\end{align*}
\end{small}
Thus, $f([\alpha_4,\cdots,\alpha_1])$ equals
\begin{small}
\begin{align*}
&\max\begin{cases}
4\alpha_4+\sum\limits_{j=1}^3 j(\alpha_j-\alpha_j')+f([\alpha_3',\alpha_2',\alpha_1']),\\
4(\alpha_4-\alpha_4')+3\alpha_3+2(\alpha_2-\alpha_2')+(\alpha_1-\alpha_1')+1+f([\alpha_4'-1,\alpha_2'+2,\alpha_1']),\\
4(\alpha_4-\alpha_4')+3(\alpha_3-\alpha_3')+2\alpha_2+(\alpha_1-\alpha_1')+1+f([\alpha_4',\alpha_3'-1,\alpha_1'+2]),\\
4(\alpha_4-\alpha_4')+3(\alpha_3-\alpha_3')+2(\alpha_2-\alpha_2')+\alpha_1+1+f([\alpha_4',\alpha_3',\alpha_2'-1]),\\
4(\alpha_4-\alpha_4')+3\alpha_3+2\alpha_2+(\alpha_1-\alpha_1')+1+f([\alpha_4'-1,\alpha_1'+3]),\\
4(\alpha_4-\alpha_4')+3(\alpha_3-\alpha_3')+2\alpha_2+\alpha_1+1+f([\alpha_4',\alpha_3'-1]),\\
4(\alpha_4-\alpha_4')+3\alpha_3+2\alpha_2+\alpha_1+1+f([\alpha_4'-1])
\end{cases}\\
=\,&\max\begin{cases}
\sum\limits_{j=1}^4 j\alpha_j,\\
\sum\limits_{j=1}^4 j\alpha_j-\alpha_4'+2,\\
\sum\limits_{j=1}^4 j\alpha_j-(\alpha_4'+\alpha_3')+1,\\
\sum\limits_{j=1}^4 j\alpha_j-(\alpha_4'+\alpha_3'+\alpha_2'),\\
\sum\limits_{j=1}^4 j\alpha_j-2\alpha_4'+2,\\
\sum\limits_{j=1}^4 j\alpha_j-2(\alpha_4'+\alpha_3'),\\
\sum\limits_{j=1}^4 j\alpha_j-3\alpha_4'
\end{cases}\\
=\,&\sum\limits_{j=1}^4 j\alpha_j+1.
\end{align*}
\end{small}

\item[5.] For $k=5,6,7$, the arguments are similar.
\end{enumerate}
\end{proof}

\subsection{Proof of Theorem \ref{main-2}}

\begin{lemm} \label{Delta_n_geq_log2i}
For $n\geq \log_2 i$, $i\geq 2$, we have
\begin{equation*}
    \Delta^{[n,0_1,\dots, 0_{i-1}]} = \Delta^{[\lfloor \log_2 i\rfloor -1, 0_1,\dots, 0_{i-1}]}.
\end{equation*}
\end{lemm}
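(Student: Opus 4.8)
The plan is to realize the whole family inside the Vakil framework. Since $[n,0_1,\dots,0_{i-1}]=\overline{2^{i}(2^{n}-1)}$ is a binary class of dimension $i$, I would first determine which members $2^{i}(2^{v}-1)$ are Vakil numbers. Writing $2^{i}(2^{v}-1)=2^{a+1}K$, the odd factor $2^{v}-1$ forces $a\le i-1$ and $K=2^{\,i-a-1}(2^{v}-1)$, so the smallest admissible $K$ occurs at $a=i-1$, where $K=2^{v}-1$; the Vakil condition $a\le K\le 2a+1$ there is exactly $\log_2 i\le v\le \log_2 i+1$. Setting $t=\lfloor\log_2 i\rfloor$ (so $m=t-1$), this shows that $v^{\ast}:=t+1$ is the largest $v$ for which $[v,0_1,\dots,0_{i-1}]$ is a Vakil number, with Vakil pair $(i-1,\,2^{v^{\ast}}-1)$, while $[v,0_1,\dots,0_{i-1}]$ is not a Vakil number for any $v>v^{\ast}$.

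With $v^{\ast}$ in hand, for an integer $n>v^{\ast}$ each of $[n,0_1,\dots,0_{i-1}],\dots,[v^{\ast}+1,0_1,\dots,0_{i-1}]$ is a non-Vakil number of dimension $i$ whose unique nonzero bracket entry is the leading one, so Theorem \ref{18} applies at every step and lowers that entry by one while preserving the dimension. Chaining these canonical reductions and using Remark \ref{f_by_cano} with Lemma \ref{8} yields $\Delta^{[n,0_1,\dots,0_{i-1}]}=\Delta^{[v^{\ast},0_1,\dots,0_{i-1}]}$ for every $n\ge v^{\ast}$, the chain stopping at $v^{\ast}$ precisely because that term is itself a Vakil number. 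The only integer $n$ with $\log_2 i\le n\le v^{\ast}$ not reached this way is $n=t$, which arises when $i=2^{t}$; there both $[t,0_1,\dots,0_{i-1}]$ and $[t+1,0_1,\dots,0_{i-1}]$ are Vakil with pairs $(i-1,2^{t}-1)$ and $(i-1,2^{t+1}-1)$, and Lemma \ref{for_Vakil} gives $\Delta^{[t+1,\dots]}-\Delta^{[t,\dots]}=(2^{t+1}-2^{t})-i=2^{t}-i=0$. Hence $\Delta^{[n,0_1,\dots,0_{i-1}]}=\Delta^{[v^{\ast},0_1,\dots,0_{i-1}]}$ for every integer $n\ge\log_2 i$.

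It remains to match this common value with the right-hand side. Lemma \ref{for_Vakil} gives
\[
\Delta^{[v^{\ast},0_1,\dots,0_{i-1}]}=(2^{t+1}-1)+\tfrac{(i-1)i}{2}-i(t+1),
\]
while $\Delta^{[m,0_1,\dots,0_{i-1}]}$ is computed by Corollary \ref{f_n_leq_log2i}, valid since $m=t-1<\log_2 i$. Here the defining relation $(2^{m}-1)2^{k-1}+k+1\approx i$ pins down $k=2$ when $i$ is not a power of two and $k=1$ when $i=2^{t}$ (the value $k=3$ is impossible, as it would force $i\ge 2^{t+1}$). Substituting each of these into the corollary's formula and subtracting the displayed expression, the difference collapses to $0$ in both cases by a one-line identity, giving $\Delta^{[v^{\ast},\dots]}=\Delta^{[m,\dots]}$ and hence the lemma; as an independent check, $[n,\dots]\leadsto[m,\dots]$ with Lemmas \ref{exist_path} and \ref{inequality_Delta} already forces $\Delta^{[n,\dots]}\ge\Delta^{[m,\dots]}$. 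The hard part will be this final bookkeeping: correctly isolating $v^{\ast}$ from the Vakil inequalities and determining the corollary's $k$ as a function of $i$, so that the two closed forms can be seen to agree.
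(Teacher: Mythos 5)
Your proposal is correct, and it follows the same overall strategy as the paper's proof: use Theorem \ref{18} together with Remark \ref{f_by_cano} and Lemma \ref{8} to walk the leading bracket entry down along the canonical path until a Vakil number is reached, then compare the resulting $\Delta$ with $\Delta^{[m,0_1,\dots,0_{i-1}]}$ via Lemma \ref{for_Vakil} and Corollary \ref{f_n_leq_log2i}. The substantive difference is where the descent stops. The paper asserts that the descent continues to $[n',0_1,\dots,0_{i-1}]$ with $n'\overset{+}{\approx}\log_2 i$ and that this is the first Vakil number met; your analysis of the condition $a\le K\le 2a+1$ at $a=i-1$ shows the descent in fact already stops at $v^{\ast}=\lfloor\log_2 i\rfloor+1>\log_2 i$, since $[v^{\ast},0_1,\dots,0_{i-1}]$ is itself Vakil with pair $(i-1,2^{v^{\ast}}-1)$ and Theorem \ref{18} no longer applies there (this is consistent with the paper's own Example \ref{ex_reduction}, where for $i=3$ the first Vakil number reached is $[2,0,0]$, not $[1,0,0]$). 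Consequently you must, and do, supply the bridging steps the paper's argument glosses over: the identity $\Delta^{[v^{\ast},\dots]}=\Delta^{[m,\dots]}$ obtained by matching the closed form from Lemma \ref{for_Vakil} against Corollary \ref{f_n_leq_log2i} with the correct case split ($k=2$ for $i$ not a power of two, $k=1$ for $i=2^{t}$), plus the separate check at $n=t$ when $i=2^{t}$. Your determination of $v^{\ast}$, of the $k$ in the corollary, and the edge case are all right, and the asserted cancellations do hold (the difference is $3+\tfrac{4i-6}{2}-2i=0$ in the first case and $2^{t}-i=0$ in the second), so what you buy over the paper's route is a descent that actually terminates where Theorem \ref{18} says it must; the only thing to add in a final write-up is those two lines of arithmetic, which you currently leave as an assertion.
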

\begin{proof}
Denote $[n,0_1,\dots,0_{i-1}]$ by $(n,i)'$. Since $2^i(2^n-1)$ is not a Vakil number, Theorem \ref{18} implies that in the canonical path of $T_{(n,i)'}$ we have
\[
(n,i)'\xrightarrow{\makebox[1.5cm]{i steps}}(n-1,i)'\xrightarrow{\makebox[1.5cm]{i steps}}\dots \xrightarrow{\makebox[1.5cm]{i steps}}(n',i)',
\]
where $1\leq n' \overset{+}{\approx} \log_2 i$. The number $2^i(2^{n'}-1)$ is the first Vakil number encountered in the reduction process from $(n,i)'$, with
\begin{equation*}
    n'=\begin{cases}
    \lfloor \log_2 i \rfloor & \text{if } \log_2 i \notin \mathbb{N},\\
    \lfloor \log_2 i \rfloor-1 & \text{if } \log_2 i \in \mathbb{N}.
    \end{cases}
\end{equation*}
By Remark \ref{f_by_cano}, we have
\[
\Delta^{[n,0_1,\dots,0_{i-1}]}=\Delta^{[n',0_1,\dots,0_{i-1}]}.
\]
In the case $\log_2 i \notin \mathbb{N}$, by Corollary \ref{f_n_leq_log2i}, we obtain
\[
\Delta^{[n'-1,0_1,\dots,0_{i-1}]}= 2^{n'+1}+\frac{i(i-5)}{2}-1-n'i
= \Delta^{[n',0_1,\dots,0_{i-1}]},
\]
since the number $k\geq 1$ such that $i\approx (2^{n'-1}-1)2^k+k+1$ is $k=2$, and the number $k\geq 1$ such that $i\approx (2^{n'}-1)2^k+k+1$ is $k=1$.
\end{proof}

\begin{ex}
In Example \ref{ex_reduction}, we see that $\Delta^{[2,0,0]} = \Delta^{[1,0,0]} = 0$.
\end{ex}

We now proceed to the proof of Theorem \ref{main-2}.
\begin{proof}
By Lemma \ref{exist_path}, we have
\begin{equation*}
[\sum\limits_{j=1}^k \alpha_j, 0_1,\dots,0_{k-1}] \leadsto [\alpha_k,\dots,\alpha_1] \leadsto [m,0_1,\dots,0_{k-1}].
\end{equation*}
By Lemmas \ref{inequality_Delta} and \ref{Delta_n_geq_log2i}, we have
\begin{equation*}
    \Delta^{[m,0_1,\dots,0_{k-1}]} = \Delta^{[\sum\limits_{j=1}^k \alpha_j, 0_1,\dots,0_{k-1}]} \geq \Delta^{[\alpha_k,\dots,\alpha_1]} \geq \Delta^{[m,0_1,\dots,0_{k-1}]}.
\end{equation*}
Hence, by Corollary \ref{f_n_leq_log2i}, we obtain
\begin{equation*}
    \Delta^{[\alpha_k,\dots,\alpha_1]} = \Delta^{[m,0_1,\dots,0_{k-1}]} = (2^m-1)2^h+\frac{(k-h)(k-h-1)}{2} -mk,
\end{equation*}
where $h\geq 1$ is such that $(2^m-1)2^{h-1}+h+1 \approx k$, or equivalently, $k \approx (2^m-1)2^h+h+1$.
\end{proof}

\subsection{Proof of Theorem \ref{main-0}}
\begin{proof}
By Remark \ref{f_by_cano} and Lemma \ref{8}, we have $\Delta^n=\Delta^{\overline{n}}=\Delta^{n'}$, where $n'$ is the closest Vakil number to $\overline{n}$. By Theorem \ref{23}, $\Delta^{n'} = \Delta^{\widetilde{n}}$, where $\widetilde{n}$ is the closest Vakil number to $n'$ with Vakil pair $(a,k)$, $4 \mid k$. Indeed, with the notation in Theorem \ref{23}, the binary class representations of $N_i$ are
\begin{align*}
N_0&=[a,\dots ,b, c, 0,\dots 0],\\
N_1&=[a,\dots ,b, c,1,\dots 0],\\
N_2&=[a,\dots ,b, c+1,\dots 0],\\
N_3&=[a,\dots ,b, c+2,\dots 0],\\
N_4&=[a,\dots ,b+1, 0, 0\dots 0],
\end{align*}
where $k_0 =[a,\dots ,b,c,0]$. It is clear that if $n'$ is one of $N_0, N_1, N_2, N_3$, then $\widetilde{n}=N_0$, and if $n'=N_4$, then $\widetilde{n}=N_4$. It is also clear that $\widetilde{n}$ is exactly the closest Vakil number to $\overline{n}$ with Vakil pair $(a,k)$, $4 \mid k$. Thus, we have proven the theorem.
\end{proof}

We can describe a way to compute $f(n)$ using Theorem \ref{main-0} as follows:
\begin{itemize}
\item[1.] Suppose that $\overline{n}$ has dimension $d$. Let $x=\lceil \log_2 d \rceil$.
\item[2.] First, we create a short table of Vakil pairs of dimension $d$:
\begin{itemize}
    \item If $2^{x-1}+x<d$, then we start with the first Vakil pair $V(2^d)=(d-x-1,2^x)$. By Lemma \ref{for_Vakil}, we can compute $f(2^d)$ and $\Delta^{2^d}$. By Theorem \ref{26}, to find the remaining Vakil pairs of dimension $d$ and the corresponding values of $\Delta$, we add $4$ to $2^x$ as long as the sum does not exceed $2^{x+1}$. We use Theorem \ref{23} to obtain the new pair $(a_4,k_4)$ from $(a_0,k_0)$. We compute the Vakil pair $(a',k')$ for each number $2^{a+1}k$ and skip all Vakil pairs $(a',k')$ such that $4 \nmid k'$. The corresponding value of $\Delta$ is given by the second and third parts of Theorem \ref{23}. Note that these are inferred directly from Lemma \ref{for_Vakil}, but Theorem \ref{23} provides a faster method.
    \item If $2^{x-1}+x\geq d$, we do the same thing with the Vakil pair $V(2^d)=(d-x,2^{x-1})$, while adding $4$ does not exceed $2^x$. In this case, we know that all pairs we obtain are Vakil pairs by the second part of Theorem \ref{26}.
\end{itemize}
\item[3.] Now, we use the table in Step 2 to compute $f(n)$. By Lemma \ref{exist_path}, we determine the list of classes that $\overline{n}$ can reach in the table. The closest class to $\overline{n}$ is $\widehat{n}$. Then $\Delta^{\overline{n}} = \Delta^{\widehat{n}}$, and $f(n)=S(\overline{n})+\widehat{n}$.
\end{itemize}

\section{Examples}\label{example}

We give an example of how to compute $f(n)$ using Theorem \ref{main-0}.
\begin{ex}
Let $\overline{n}=[1,1,2,1,3^{2^7},0,0,0,2^9,4,\dots,0]$ be of dimension $53$. We compute $f(n)$ as follows.
\begin{itemize}
    \item[1.] The dimension of $\overline{n}$ is $53$. We have $\lceil \log_2 53 \rceil = 6$.
    \item[2.] We create a short table of Vakil pairs of dimension $53$:
    \begin{itemize}
        \item We have $2^5+6=38<53$. Hence, $V(2^{53})=(46,64)$. By Lemma \ref{for_Vakil}, we have $f(2^{53}) = 64 + \frac{46\cdot 47}{2}$ and
        \[
        \Delta^{2^{53}}=f(2^{53})-S(2^{53})=1092.
        \]
        \item By Theorem \ref{26}, to find the remaining Vakil pairs of dimension $53$ and the corresponding values of $\Delta$, we add $4$ to $k$ until we reach $2^7-4$. The corresponding value of $a$ for $k+4$ is then computed by the first part of Theorem \ref{23}. We compute the Vakil pair $(a',k')$ for each number $2^{a+1}k$ and skip all Vakil pairs $(a',k')$ such that $4\nmid k'$. The corresponding value of $\Delta$ is given by Theorem \ref{23}.
        \begin{small}
        \begin{table}[ht!]
\centering{
\begin{tabular}{|c|l|l|l|l|l|}
\hline
$a$&$k$&$t_{k/4}$&Vakil pair&$\Delta$&representation of $2^{a+1}k$\\
\hline
46&64&0&(46,64) &$64+\frac{46\cdot47}{2}-53=1092$&[1,0,\dots,0]\\
47&68&1&(47,68)&$1092+\frac{1\cdot2}{2}=1093$&[1,0,0,1,0,\dots,0]\\
47&72&0&(47,72)&$1093+\frac{2\cdot3}{2}=1096$&[1,0,1,0,\dots,0]\\
48&76&2&(48,76)&$1096+\frac{1\cdot2}{2}=1097$&[1,0,2,0,\dots,0]\\
47&80&0&(47,80)&$1097+\frac{3\cdot4}{2}=1103$&[1,1,0,\dots,0]\\
48&84&1&(48,84)&$1103+\frac{1\cdot2}{2}=1104$&[1,1,1,0,\dots,0]\\
48&88&0&(48,88)&$1104+\frac{2\cdot3}{2}=1107$&[1,2,0,\dots,0]\\
49&92&3&(49,92)&$1107+\frac{1\cdot2}{2}=1108$&[1,3,0,\dots,0]\\
47&96&0&(48,48)&$1108+\frac{4\cdot5}{2}+48-48=1118$&[2,0,\dots,0]\\
48&100&1&(49,50), skip& &\\
48&104&0&(49,52)&$1118+\frac{1\cdot2}{2}=1119$&[2,1,\dots,0]\\
49&108&2&(50,54), skip& &\\
48&112&0&(49,56)&$1119+\frac{1\cdot2}{2}+49-56=1122$&[3,0,\dots,0]\\
49&116&1&(50,58), skip& &\\
49&120&0&(50,60)&$1122+\frac{1\cdot2}{2}=1123$&[4,0,\dots,0]\\
50&124&skip&(51,62), skip& &\\
\hline
\end{tabular}}
\caption{\label{tab_53} Short table of $\Delta$ for $53$-dimensional binary classes.}
\end{table}
\end{small}
    \end{itemize}
    \item[3.] We use Table \ref{tab_53} to compute $f(n)$. By Lemma \ref{exist_path} and the last column of Table \ref{tab_53}, we know that $\overline{n}$ can reach only the first six classes in the table. The closest class to $\overline{n}$ is $[1,1,1,0,\dots,0]$. Hence, $\Delta^{\overline{n}} = 1104$, and $f(n)=S(\overline{n})+1104$.
\end{itemize}
\end{ex}

The example below shows how to use Lemma \ref{for_Vakil} and Theorems \ref{main-1} and \ref{main-2}. Although Theorem \ref{main-0} is powerful, it should be considered only when we cannot apply the previous results.

\begin{ex} \hfill
\begin{enumerate}
\item[1.] If $n=69632$, then $n=\overline{n}=2^{12}\cdot 17$ is a Vakil number with Vakil pair $(11,17)$. Hence, by Lemma \ref{for_Vakil}, we have
\[
f(69632)=17+\frac{11\cdot 12}{2}=83.
\]

\item[2.] If $n=473$, then $n=111011001_{(2)}$ and $\overline{n}=11101100_{(2)}=236=2^2\cdot 59$. We see that $473$ is not a Vakil number because $59>3$. Since $236=[3,2,0]$ has dimension $3$, by Theorem \ref{main-1}, we have
\[
f(473)=S([3,2,0])=13.
\]

\item[3.] If $n=8923773549686799$, then
\begin{align*}
n&=11111101101000001111111110000011111000011000000001111_{(2)},\\
\overline{n}&=1111110110100000111111111000001111100001100000000_{(2)}\\
&=557735846855424=2^8\cdot 2178655651779.
\end{align*}
Hence, $n$ is not a Vakil number. We see that
\[
\overline{n}=[6,2,1,0,0,0,0,9,0,0,0,0,5,0,0,0,2,0,0,0,0,0,0,0]
\]
has dimension $24$. Since $\alpha_{24}=6\geq \lfloor \log_2 24 \rfloor-1=3$, by Theorem \ref{main-2}, we have
\[
f(n)=S(\overline{n})+(2^3-1)2^2+\frac{(24-2)(24-3)}{2}-3\cdot 24=628.
\]

\item[4.] If $n=12737511856113$, then
\begin{align*}
n&=10111001010110101110110111101001011111110001_{(2)},\\
\overline{n}&=1011100101011010111011011110100101111111000_{(2)}\\
&=6368755928056=2^3\cdot 796094491007.
\end{align*}
Hence, $n$ is not a Vakil number. We see that
\[
\overline{n}=[1,3,0,1,1,2,1,3,2,4,1,0,1,7,0,0]
\]
has dimension $16$, and $\alpha_{16}=1<\lfloor \log_2 16 \rfloor-1$. Therefore, we cannot apply Lemma \ref{for_Vakil} or Theorems \ref{main-1} and \ref{main-2} directly, and we use Theorem \ref{main-0}. Since $\lceil \log_2 16\rceil=4$ and $2^3+4=12<16$, we have $V(2^{16})=(11,16)$.

\begin{table}[ht!]
\centering{
\begin{tabular}{|c|l|l|l|l|l|}
\hline
$a$&$k$&$t_{k/4}$&Vakil pair&$\Delta$&representation of $2^{a+1}k$\\
\hline
$11$&$16$&$0$&$(11,16)$&$16+\frac{11\cdot 12}{2}-16=66$&$[1,0,\dots,0]$\\
$12$&$20$&$1$&$(12,20)$&$66+\frac{1\cdot 2}{2}=67$&$[1,1,0,\dots,0]$\\
$12$&$24$&$0$&$(12,24)$&$67+\frac{2\cdot 3}{2}=70$&$[2,0,\dots,0]$\\
$13$&$28$&skip&$(14,14)$ (skip)&&$[3,0,\dots,0]$\\
\hline
\end{tabular}}
\caption{\label{tab_16} Short list table of $\Delta$ for $16$-dimensional binary classes.}
\end{table}

By Lemma \ref{exist_path}, $\overline{n}$ can reach only the first two classes in Table \ref{tab_16}. The closest class to $\overline{n}$ is $[1,1,0,\dots,0]$. Hence, $\Delta^{\overline{n}}=67$ and
\[
f(n)=S(\overline{n})+67=287.
\]
\end{enumerate}
\end{ex}

\bibliography{references}{}
\bibliographystyle{alpha}

\noindent Institut f\"ur Algebra und Geometrie, Otto-von-Guericke-Universit\"at Magdeburg, Germany\\
E-mail: \href{khanh.mathematic@gmail.com}{khanh.mathematic@gmail.com} \\

\end{document}